\documentclass[12pt,twoside,a4paper]{amsart}

\usepackage{amssymb,amsmath,amsthm}
\usepackage{geometry}
\geometry{hmargin=1.85cm,vmargin=1.5cm}

\usepackage{color}

\newtheorem{theorem}{Theorem}[section]
\newtheorem{thm}[theorem]{Theorem}

\newtheorem{coro}[theorem]{Corollary}

\theoremstyle{plain}
\newtheorem*{namedthm}{\namedthmname}
\newcounter{namedthm}

\makeatletter

\makeatother

\newcommand{\R}{\mathbb{R}}
\newcommand{\C}{\mathbb{C}}
\newcommand{\N}{\mathbb{N}}

\numberwithin{equation}{section}

\usepackage{hyperref}
\hypersetup{
	bookmarks=true,         
	unicode=false,         
	pdftoolbar=true,       
	pdfmenubar=true,       
	pdffitwindow=false,    
	pdfstartview={FitH},   
	pdftitle={Pluripotential vs Viscosity solutions to complex Monge-Amp\`ere flows},    
	pdfauthor={Vincent Guedj, Chinh H. Lu, and Ahmed Zeriahi},     
	colorlinks=true,       
	linkcolor=black,         
	citecolor=black,        
	filecolor=black,      
	urlcolor=black}          

\setcounter{tocdepth}{1}
\begin{document}
	\def\K{\mathbb{K}}
	\def\R{\mathbb{R}}
	\def\C{\mathbb{C}}
	\def\Z{\mathbb{Z}}
	\def\Q{\mathbb{Q}}
	\def\D{\mathbb{D}}
	\def\N{\mathbb{N}}
	\def\T{\mathbb{T}}
	\def\P{\mathbb{P}}
	\def\A{\mathscr{A}}
	\def\CC{\mathscr{C}}
	\renewcommand{\theequation}{\thesection.\arabic{equation}}
	\renewenvironment{proof}{{\bfseries Proof:}}{\qed}
	\renewcommand{\thelemme}{\empty{}}
	\newtheorem{preuv}{Preuve}[section]
	\newtheorem{cond}{C}
	\newtheorem{lemma}{Lemma}[section]
	\newtheorem{corollary}{Corollary}[section]
	\newtheorem{proposition}{Proposition}[section]
	\newtheorem{notation}{Notation}[section]
	\newtheorem{remark}{Remarque}[section]
	\newtheorem{example}{Example}[section]
	\newtheorem{probleme}{Probleme}[section]
	\bibliographystyle{plain}

\title[Generalized study of the operator $\alpha \partial^k \bar{\partial}^{k} + \beta \bar{\partial}^k +\gamma  \partial^k +  c$~~]{\textbf {Generalized study of the operator} $\alpha \partial^k \bar{\partial}^{k} + \beta \bar{\partial}^k +\gamma  \partial^k +  c$ \textbf {in weighted Hilbert space} $L^2(\mathbb{\C}, \mathrm{\textnormal{e}}^{\mathrm{-\vert z \vert^2}})$}
\author[E.\  Bodian   \& W.\ O.\  Ingoba \& S.\  Sambou \&   P.\  Badiane  \& S.\  Sambou ]{ Eramane Bodian  \& Winnie Ossete Ingoba \& Souhaibou Sambou \& Papa Badiane \&   Salomon Sambou }
\address{Department of Mathematics\\UFR of Sciences and Technologies \\ University Assane Seck of Ziguinchor, BP: 523 (S\'en\'egal)}
\email{m.bodian@univ-zig.sn}
\address{Departement of Mathematics\\ FST \\ Marien N'Gouabi University of Brazzaville, BP: 69 (Congo)}
\email{wnnossete@gmail.com}
\address{Department of Mathematics\\UFR of Sciences and Technologies \\ University Assane Seck of Ziguinchor, BP: 523 (S\'en\'egal)}
\email{s.sambou1440@zig.univ.sn }
\address{Department of Mathematics\\UFR of Sciences and Technologies \\ University Assane Seck of Ziguinchor, BP: 523 (S\'en\'egal)}
\email{p.badiane4963@zig.univ.sn}
\address{D\'epartement de Math\'ematiques\\UFR des Sciences et Technologies \\ Universit\'e Assane Seck de Ziguinchor, BP: 523 (S\'en\'egal)}
\email{wnnossete@gmail.com}

\address{Department of Mathematics\\UFR of Sciences and Technologies \\ University Assane Seck of Ziguinchor, BP: 523 (S\'en\'egal)}
\email{ssambou@univ-zig.sn }
\date{\today}

\subjclass{}

\maketitle
\renewcommand{\abstractname}{Abstract}
\begin{abstract}
By H\"ormander's $L^2$-method, we study the operator $\alpha \partial^k \bar{\partial}^{k} + \beta \bar{\partial}^k +\gamma  \partial^k +  c$ for any order $k$ with $\alpha, \beta, \gamma \in \mathbb{R}$ such that $(\alpha, \beta, \gamma) \neq(0,0,0)$ in the weighted Hilbert space $L^2(\mathbb{\C}, \mathrm{\textnormal{e}}^{-|z|^2})$. We prove the existence of its right inverse which is also a bounded operator. Subsequently we will study two cases that arise from this operator, namely:
\begin{enumerate}
\item Case where $\alpha= \gamma=0$:  The operator $\beta \bar{\partial}^{k}  + c$ with $\vert \beta \vert \geq 1$.
\item Case where $\beta= \gamma=0$: The operator  $\alpha \partial^{k} \bar{\partial}^{k}  + c$ with $\vert \alpha \vert \geq 1$.

\end{enumerate}
\vskip 2mm
\noindent
\keywords{{\bf Keywords:}  The operators $\partial^k \bar{\partial}^{k}$,  $\partial^k$, $\bar{\partial}^{k}$, weighted Hilbert space $L^2(\mathbb{\C}, \mathrm{\textnormal{e}}^{-|z|^2})$, H\"ormander's $L^2$-method.}
\vskip 1.3mm
\noindent
{\bf Mathematics Subject Classification (2010)} 32A37, 32F32.
\end{abstract}  
\renewcommand{\abstractname}{Abstract}
\section*{Statements and Declarations}
Competing Interests: The authors attest that there are  non-financial interests that are directly or indirectly related to the work submitted for publication.
\section*{Data availability statements}
We have not included any data in this manuscript.
\section{Introduction}
In {\cite{1}}, Shoayu DAI and Yifei PAN are interested by the study of the
operator $\frac{d^k}{dx^k} + a$. They have extended this work in {\cite{2}} to
the operator $\bar{\partial}^k + a$. In same direction, inspired by the
$\partial \bar{\partial}$-problem, We have studied the operator $\partial^k
\bar{\partial}^k + c$ in the weighted Hilbert space $L^2 (\mathbb{C},
\mathrm{\text{e}}^{- |z|^2})$ in {\cite{4}}. In this paper, we consider the
operator $\alpha \partial^k \bar{\partial}^k + \beta \bar{\partial}^k + \gamma
\partial^k + c$ so that if $\alpha = \gamma = 0$ and $\beta = 1,$ we get
the result of Shoayu DAI and Yifei PAN in {\cite{2}}. On the other hand, if
$\alpha = \gamma = 0$ and $\alpha = 1$, then we get our result in
{\cite{4}}. So if the previous situations do not apply, we will have a general
study of this operator $\alpha \partial^k \bar{\partial}^k + \beta
\bar{\partial}^k + \gamma \partial^k + c$ in weighted Hilbert space $L^2
(\mathbb{C}, \mathrm{\text{e}}^{- |z|^2})$ while noting that we do not
consider the powers of the Laplacian in the complex plane in the case of the
$L^2$-estimate. Under certain hypothesis, we prove the existence of a weak
solution of the equation
\[ \alpha \partial^k \bar{\partial}^k u + \beta \bar{\partial}^k u + \gamma
   \partial^k u + cu = f \]
where $u, f \in L^2 (\mathbb{C}, \mathrm{\text{e}}^{- |z|^2})$ and $c$ is a
complex constant. We thus have the following result:

\begin{theorem}
  \label{P}Let $f \in L^2 (\mathbb{C}, \mathrm{\text{e}}^{- |z|^2})$, $\alpha,
  \beta, \gamma \in \mathbb{R}$ with $(\alpha, \beta, \gamma) \neq (0, 0, 0)$
  and $\phi : \mathbb{C} \rightarrow \mathbb{C}$ a function of class
  $C^{\infty}$ with compact support such as
  \[ \gamma \langle \bar{\partial}^k (\partial^{k - l} \phi), \partial^{k - l}
     \phi \rangle_{\varphi} + \beta \langle \partial^k (\bar{\partial}^{k - l}
     \phi), \bar{\partial}^{k - l} \phi \rangle_{\varphi} = 0, \]
  then there exists a weak solution $u \in L^2 (\mathbb{C},
  \mathrm{\text{e}}^{- |z|^2})$ of the equation
  \[ \alpha \partial^k \bar{\partial}^k u + \beta \bar{\partial}^k u + \gamma
     \partial^k u + cu = f \]
  with the norm estimate
  \[ \int_{\mathbb{C}} |u|^2  \mathrm{\text{e}}^{- |z|^2} d \sigma \leq
     \frac{1}{\alpha^2 (k!)^2 + \beta^2 (k!) + \gamma^2 (k!)} 
     \int_{\mathbb{C}} |f|^2  \mathrm{\text{e}}^{- |z|^2} d \sigma . \]
\end{theorem}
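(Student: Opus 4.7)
The plan is to follow H\"ormander's $L^2$-method: identify the formal adjoint of $L := \alpha\partial^k\bar\partial^k + \beta\bar\partial^k + \gamma\partial^k + c$ in the weighted space $L^2(\mathbb{C},e^{-|z|^2})$, establish an a priori lower bound for $L^{*}$ on test functions, and then invoke Hahn-Banach to produce a weak solution with the desired norm control. Integration by parts against the weight $e^{-|z|^{2}}$ yields the Bargmann--Fock type identities $\partial^{*} = -\bar\partial + z$ and $\bar\partial^{*} = -\partial + \bar z$, from which one reads off
\[
L^{*}\phi \;=\; \alpha(\bar\partial^{*})^{k}(\partial^{*})^{k}\phi + \beta(\bar\partial^{*})^{k}\phi + \gamma(\partial^{*})^{k}\phi + \bar c\,\phi.
\]
The target inequality is $(\alpha^{2}(k!)^{2}+\beta^{2}k!+\gamma^{2}k!)\|\phi\|_{\varphi}^{2}\leq\|L^{*}\phi\|_{\varphi}^{2}$ for every $\phi\in C_c^{\infty}(\mathbb{C})$, from which the theorem will follow by Hahn-Banach.

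To prove the a priori estimate, I would use the canonical commutation relations $[\partial,\partial^{*}]=[\bar\partial,\bar\partial^{*}]=1$ together with the vanishing of all other pairwise commutators among $\partial,\bar\partial,\partial^{*},\bar\partial^{*}$, and normal-order as
\[
\partial^{k}(\partial^{*})^{k} \;=\; \prod_{j=1}^{k}\bigl(\partial^{*}\partial + j\bigr), \qquad \bar\partial^{k}(\bar\partial^{*})^{k} \;=\; \prod_{j=1}^{k}\bigl(\bar\partial^{*}\bar\partial + j\bigr).
\]
Each factor is a nonnegative self-adjoint operator plus a positive integer, so the products are operator-bounded below by $k!$; moreover $\partial^{k}\bar\partial^{k}(\bar\partial^{*})^{k}(\partial^{*})^{k}$ factors (by commutativity) as the product of the two and is bounded below by $(k!)^{2}$. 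Taking $\varphi$-inner product against $\phi$ then gives
\[
\|(\partial^{*})^{k}\phi\|_{\varphi}^{2},\;\|(\bar\partial^{*})^{k}\phi\|_{\varphi}^{2} \geq k!\,\|\phi\|_{\varphi}^{2}, \qquad \|(\bar\partial^{*})^{k}(\partial^{*})^{k}\phi\|_{\varphi}^{2} \geq (k!)^{2}\|\phi\|_{\varphi}^{2},
\]
the third bound being essentially the authors' earlier result \cite{4}. Expanding $\|L^{*}\phi\|_{\varphi}^{2}$ as a square of four summands produces these three diagonal contributions together with six cross terms. Rewriting the $\beta$-$\gamma$ pairing $2\beta\gamma\,\mathrm{Re}\langle(\bar\partial^{*})^{k}\phi,(\partial^{*})^{k}\phi\rangle_{\varphi}$ by moving adjoints across using $[\partial,\bar\partial]=0$, and then performing $k-l$ iterated integrations by parts, collapses it precisely to the combination $\gamma\langle\bar\partial^{k}\partial^{k-l}\phi,\partial^{k-l}\phi\rangle_{\varphi} + \beta\langle\partial^{k}\bar\partial^{k-l}\phi,\bar\partial^{k-l}\phi\rangle_{\varphi}$ featured in the hypothesis, which therefore kills this cross term.

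Once the lower bound is established, Hahn-Banach concludes: the antilinear functional $L^{*}\phi \longmapsto \langle f,\phi\rangle_{\varphi}$ is well-defined and bounded on the subspace $L^{*}(C_c^{\infty})\subset L^{2}(\mathbb{C},e^{-|z|^{2}})$ with operator norm at most $(\alpha^{2}(k!)^{2}+\beta^{2}k!+\gamma^{2}k!)^{-1/2}\|f\|_{\varphi}$; extending to all of $L^{2}$ and invoking Riesz representation produces $u\in L^{2}(\mathbb{C},e^{-|z|^{2}})$ satisfying $\langle u,L^{*}\phi\rangle_{\varphi}=\langle f,\phi\rangle_{\varphi}$ for every $\phi\in C_c^{\infty}$, i.e.\ $Lu=f$ in the weak sense, together with the stated norm inequality. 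The main obstacle will be the cross-term bookkeeping: the hypothesis only kills the single mixed $\beta$-$\gamma$ pairing, so the delicate part is to show that the other five off-diagonal terms (those coupling $\alpha$ to $\beta,\gamma$ and $c$ to each of $\alpha,\beta,\gamma$) either vanish by additional integration by parts or contribute nonnegatively, making crucial use of the reality of $\alpha,\beta,\gamma$; secondary technical points are density of $C_c^{\infty}$ in the relevant weighted space and commutation of unbounded operators, handled by standard cutoff/mollifier approximations.
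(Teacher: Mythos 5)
Your overall skeleton --- formal adjoint in $L^2(\mathbb{C},e^{-|z|^2})$, an a priori bound $\|L^{*}\phi\|_{\varphi}^{2}\geq(\alpha^{2}(k!)^{2}+\beta^{2}k!+\gamma^{2}k!)\|\phi\|_{\varphi}^{2}$ on test functions, then Hahn--Banach and Riesz --- is exactly the paper's route (its Lemma 2.1 is your duality step), and your normal-ordering bounds reproduce the diagonal estimates of its Lemma 2.5. The gap is in the mixed terms, which is where the whole content of the theorem lies, and your plan both defers them and misassigns them. The paper does not expand $\|L^{*}\phi\|^{2}$ as a square of four summands; it uses the identity $\|H_{\varphi}^{*}\phi\|_{\varphi}^{2}=\|H\phi\|_{\varphi}^{2}+\langle\phi,(HH_{\varphi}^{*}-H_{\varphi}^{*}H)\phi\rangle_{\varphi}$ and discards $\|H\phi\|_{\varphi}^{2}\geq0$ (Lemma 2.2). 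This buys two things your expansion does not have: every cross term involving the constant $c$ cancels identically, because $c$ commutes with all the operators, and the $\alpha\beta$, $\alpha\gamma$, $\beta\gamma$ interactions survive only through commutators. For $\varphi=|z|^{2}$ one has $e^{|z|^{2}}\bar\partial^{i}e^{-|z|^{2}}=(-z)^{i}$ and $e^{|z|^{2}}\partial^{i}e^{-|z|^{2}}=(-\bar z)^{i}$, so the $\beta\gamma$ commutator and half of each of the $\alpha\beta$, $\alpha\gamma$ commutators vanish outright (Lemma 2.4; in your language $[\bar\partial,\partial^{*}]=[\partial,\bar\partial^{*}]=0$), and the only surviving off-diagonal contribution is
\[
\alpha\sum_{l=1}^{k}\frac{(k!)^{2}}{l!\,[(k-l)!]^{2}}\Bigl(\beta\bigl\langle\partial^{k}(\bar\partial^{k-l}\phi),\bar\partial^{k-l}\phi\bigr\rangle_{\varphi}+\gamma\bigl\langle\bar\partial^{k}(\partial^{k-l}\phi),\partial^{k-l}\phi\bigr\rangle_{\varphi}\Bigr),
\]
which is precisely what the hypothesis of the theorem is designed to annihilate (Lemma 2.5, items 4--5). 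In your sketch the hypothesis is instead attributed to the $\beta$--$\gamma$ pairing $2\beta\gamma\,\mathrm{Re}\langle(\bar\partial^{*})^{k}\phi,(\partial^{*})^{k}\phi\rangle_{\varphi}$; that cannot be right, since this pairing carries the product $\beta\gamma$ while the hypothesis is linear in $\beta$ and $\gamma$, and in the commutator picture the $\beta$--$\gamma$ interaction needs no hypothesis at all for the Gaussian weight.

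More seriously, the five cross terms you leave as ``the main obstacle'' do not, in your direct expansion, vanish or have a sign: $2\,\mathrm{Re}\bigl(\bar c\,\gamma\langle(\partial^{*})^{k}\phi,\phi\rangle_{\varphi}\bigr)$ and $2\alpha\beta\,\mathrm{Re}\langle(\bar\partial^{*})^{k}(\partial^{*})^{k}\phi,(\bar\partial^{*})^{k}\phi\rangle_{\varphi}$ are sign-indefinite, and the reality of $\alpha,\beta,\gamma$ does not help; absorbing them by Cauchy--Schwarz would destroy the stated constant. So the argument stalls exactly at the step it postpones. The repair is the commutator identity above: subtracting $\|L\phi\|_{\varphi}^{2}$ absorbs all sign-indefinite squares and cross terms (including every term containing $c$), and what remains to estimate are commutators that, for $\varphi=|z|^{2}$, are computed explicitly by Leibniz and Fa\`a di Bruno, yielding the nonnegative diagonal sums plus the single combination killed by the hypothesis. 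With that replacement your plan coincides with the paper's proof; without it, it is incomplete at its crucial point.
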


As a consequence of Theorem \ref{P}, we show that the operator $\alpha
\partial^k \bar{\partial}^k + \beta \bar{\partial}^k + \gamma \partial^k + c$
has a bounded right inverse in $L^2 (\mathbb{C}, \mathrm{\text{e}}^{-
|z|^2})$. We conclude the study in the following cases :
\begin{enumerate}
  \item Case where $\alpha = \gamma = 0$ : The operator $\beta
  \bar{\partial}^k + c$ with $| \beta | \geq 1$.
  
  \item Case where $\beta = \gamma = 0$ : The operator $\alpha \partial^k
  \bar{\partial}^k + c$ with $| \alpha | \geq 1$.
\end{enumerate}
\section{Some Lemmas}
Consider the weighted Hilbert space

$$ L^2(\mathbb{\C}, \mathrm{\textnormal{e}}^{- \varphi}) = \left\lbrace  f \in L_{loc}^2(\mathbb{\C}) : \int_{\mathbb{C}} \vert f \vert^2 \mathrm{\textnormal{e}}^{- \varphi } d\sigma < + \infty \right\rbrace $$
where $\varphi$ is a positive function on $\mathbb{C}$.\\
We define weighted inner product   on $L^2(\mathbb{\C}, \mathrm{\textnormal{e}}^{- \varphi})$ by : 
$$<f, g>_\varphi = \int_\mathbb{C} \overline{f}g \mathrm{\textnormal{e}}^{- \varphi} d\sigma \qquad \forall f, g \in L^2(\mathbb{\C}, \mathrm{\textnormal{e}}^{- \varphi})$$
and the norm on $L^2(\mathbb{\C}, \mathrm{\textnormal{e}}^{- \varphi})$ by :
$$\parallel f \parallel_\varphi = \sqrt{<f, f>_\varphi} \; \; \; \; \; \; \; \; \forall \; \; \; \; f \in L^2(\mathbb{\C}, \mathrm{\textnormal{e}}^{- \varphi}).$$
We denote by $C_0^\infty(\mathbb{C})$ the space of functions $\phi : \mathbb{C} \rightarrow \mathbb{C}$ of class $C^\infty$ with compact support.\\
Let $u,  f \in L_{loc}^2(\mathbb{\C})$. 
\vskip 0.5cm
\begin{enumerate}
\item We say that $u$ is a weak solution of the equation $\partial^k \bar{\partial}^{k} u = f$ if
$$ \int_\mathbb{C} u \partial^k \bar{\partial}^{k} \phi d\sigma = \int_\mathbb{C} f \phi d\sigma .$$
\item We say that $u$ is a weak solution of the equation $ \bar{\partial}^{k} u = f$ if
$$ \int_\mathbb{C} u  \bar{\partial}^{k} \phi d\sigma = (-1)^k \int_\mathbb{C} f \phi d\sigma .$$
\item We say that $u$ is a weak solution of the equation $ \partial^{k} u = f$ if
$$ \int_\mathbb{C} u  \partial^{k} \phi d\sigma = (-1)^k \int_\mathbb{C} f \phi d\sigma .$$
\end{enumerate}
\vskip 0.5cm
Let $\varphi$ be a $C^\infty$ positive function on $\mathbb{C}$ and $\phi \in C_0^\infty(\mathbb{C})$, we define the formal adjoint of $\partial^k \bar{\partial}^{k}$ with respect to weighted inner product   defined in $L^2(\mathbb{\C}, \mathrm{\textnormal{e}}^{- \varphi})$ and the definition of the weak solution as follows :
\begin{eqnarray*}
<\phi, \partial^k \bar{\partial}^{k} u> & = & \int_{\mathbb{C}} \overline{\phi} \partial^k \bar{\partial}^{k} u \mathrm{\textnormal{e}}^{- \varphi} d\sigma\\
& = &  \int_{\mathbb{C}}  \bar{\partial}^{k} \partial^k( \overline{\phi}\mathrm{\textnormal{e}}^{- \varphi})  u  d\sigma\\
& = &  \int_{\mathbb{C}} \mathrm{\textnormal{e}}^{ \varphi} \bar{\partial}^{k} \partial^k ( \overline{\phi}\mathrm{\textnormal{e}}^{- \varphi})  u \mathrm{\textnormal{e}}^{- \varphi} d\sigma\\
& = & \int_{\mathbb{C}} \overline{\mathrm{\textnormal{e}}^{ \varphi} \partial^{k} \bar{\partial}^k ( \phi \mathrm{\textnormal{e}}^{- \varphi})} u \mathrm{\textnormal{e}}^{- \varphi} d\sigma,\;\; \mathrm{\textnormal{e}}^{- \varphi}\; \mbox{being with real variables}\\
& = & <\mathrm{\textnormal{e}}^{ \varphi} \partial^{k} \bar{\partial}^k ( \phi \mathrm{\textnormal{e}}^{- \varphi}), u >_\varphi\\
& =: & < \bar{\partial}_\varphi^{k*} \partial_\varphi^{k*}( \phi), u >_\varphi
\end{eqnarray*}
where $ \bar{\partial}_\varphi^{k*} \partial_\varphi^{k*}( \phi) = \mathrm{\textnormal{e}}^{ \varphi} \partial^{k} \bar{\partial}^k ( \phi \mathrm{\textnormal{e}}^{- \varphi})$ is the formal adjoint of $\partial^k \bar{\partial}^{k}$ in $C_0^\infty(\mathbb{C})$.\\
By analogy we have
$$
\left\{
    \begin{array}{lll}
        \bar{\partial}_\varphi^{k*}\phi = (-1)^k\mathrm{\textnormal{e}}^{ \varphi}\partial^k( \phi \mathrm{\textnormal{e}}^{- \varphi})\\
        \mbox{and}\\
        \partial_\varphi^{k*}\phi = (-1)^k\mathrm{\textnormal{e}}^{ \varphi}\bar{\partial}^k( \phi \mathrm{\textnormal{e}}^{- \varphi}) & 
    \end{array}
\right.
$$
the formal adjoints of the operators $\bar{\partial}^k$ and $\partial^k$.\\
Let $(\alpha \partial^k \bar{\partial}^{k} + \beta \bar{\partial}^k +\gamma  \partial^k +  c)_\varphi^*$ be the formal adjoints of $\alpha \partial^k \bar{\partial}^{k} + \beta \bar{\partial}^k +\gamma  \partial^k +  c$ in $C_0^\infty(\mathbb{C})$.\\
Let $I_\varphi^* = I$ where $I$ is the identity operator, then $$ (\alpha \partial^k \bar{\partial}^{k} + \beta \bar{\partial}^k +\gamma  \partial^k +  c)_\varphi^* = \alpha \bar{\partial}_\varphi^{k*} \partial_\varphi^{k*} + \beta \bar{\partial}_\varphi^{k*} + \gamma   \partial_\varphi^{k*}  + c.$$
We start by giving some lemmas in the general case of weighted spaces based on
functional analysis which will be very useful to us in the proof of Theorem \ref{P}.
\vskip 0.5cm
For the sequel, let us set :
$$
\partial^{k} \bar{\partial}^k = R\;\;\mbox{et}\;\; \bar{\partial}_\varphi^{k*} \partial_\varphi^{k*} = \left(\partial^{k} \bar{\partial}^k\right)^* = R^*. 
$$
\begin{lemma} \label{A}
For any function $f \in L^2(\mathbb{\C}, \mathrm{\textnormal{e}}^{- \varphi})$, there exists a weak solution $u \in L^2(\mathbb{\C}, \mathrm{\textnormal{e}}^{- \varphi})$ of the equation
$$ \alpha Ru + \beta \bar{\partial}^ku +\gamma  \partial^ku +  cu  = f$$ with the estimation of the norm
$$\vert \vert u \vert \vert_\varphi^2 \leq a$$ if and only if
$$\vert <f, \phi>_\varphi \vert^2 \leq a \vert \vert (\alpha R + \beta \bar{\partial}^k +\gamma  \partial^k +  c)_\varphi^* \phi \vert \vert_\varphi ^2$$
$\forall$ $\phi \in C_0^\infty(\mathbb{C})$ where $a$ is a constant.
\end{lemma}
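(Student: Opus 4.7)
The plan is to establish this as a standard Hörmander-type duality lemma via the Hahn-Banach theorem and the Riesz representation theorem. Set $T := \alpha R + \beta \bar{\partial}^k + \gamma \partial^k + c$ and let $T_\varphi^*$ denote its formal adjoint in the weighted inner product $\langle \cdot, \cdot \rangle_\varphi$.

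First I would verify the following pairing identity: for $u \in L^2(\mathbb{C}, e^{-\varphi})$, the statement ``$u$ is a weak solution of $Tu = f$'' is equivalent to
\[
\langle u, T_\varphi^* \phi \rangle_\varphi = \langle f, \phi \rangle_\varphi \qquad \forall \phi \in C_0^\infty(\mathbb{C}).
\]
This follows from the three definitions of weak solutions recalled at the start of Section~2, combined with the explicit formulas for $R^*$, $\bar{\partial}_\varphi^{k*}$ and $\partial_\varphi^{k*}$. The key observation is that plugging $\psi := \bar{\phi}\, e^{-\varphi} \in C_0^\infty(\mathbb{C})$ into the unweighted weak formulation and collecting factors of $e^{\pm \varphi}$ exactly produces the weighted pairing above.

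For the direct implication, assume $u$ is a weak solution with $\|u\|_\varphi^2 \le a$. Applying Cauchy-Schwarz in $L^2(\mathbb{C}, e^{-\varphi})$ to the pairing identity yields
\[
|\langle f, \phi\rangle_\varphi|^2 \;=\; |\langle u, T_\varphi^* \phi\rangle_\varphi|^2 \;\le\; \|u\|_\varphi^2 \, \|T_\varphi^* \phi\|_\varphi^2 \;\le\; a\, \|T_\varphi^* \phi\|_\varphi^2,
\]
which is the desired estimate.

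For the converse, assume the estimate holds. On the linear subspace
\[
V := \{ T_\varphi^* \phi \,:\, \phi \in C_0^\infty(\mathbb{C}) \} \subset L^2(\mathbb{C}, e^{-\varphi}),
\]
define $L \colon V \to \mathbb{C}$ by $L(T_\varphi^* \phi) := \langle f, \phi\rangle_\varphi$. The estimate ensures $L$ is well defined (if $T_\varphi^* \phi = 0$, then $\langle f, \phi\rangle_\varphi = 0$) and continuous with norm $\le \sqrt{a}$. By Hahn-Banach, extend $L$ to a continuous linear form on all of $L^2(\mathbb{C}, e^{-\varphi})$ with the same norm; by Riesz representation, there exists $u \in L^2(\mathbb{C}, e^{-\varphi})$ with $\|u\|_\varphi \le \sqrt{a}$ such that $L(v) = \langle u, v\rangle_\varphi$ for every $v \in L^2(\mathbb{C}, e^{-\varphi})$. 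Specializing $v = T_\varphi^*\phi$ recovers the pairing identity, so $u$ is a weak solution of $Tu=f$ with $\|u\|_\varphi^2 \le a$.

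The only delicate step is the first one: the weak formulations are stated with the unweighted Lebesgue measure and carry factors of $(-1)^k$, while $\langle \cdot, \cdot \rangle_\varphi$ is weighted and conjugate-linear in its first argument. Verifying that the formal adjoint $T_\varphi^*$ introduced in the text actually implements the pairing $\langle u, T_\varphi^*\phi\rangle_\varphi = \langle f, \phi\rangle_\varphi$ requires careful bookkeeping of conjugations and signs (and uses the fact that $\partial^k$, $\bar{\partial}^k$ intertwine with complex conjugation up to swapping $\partial \leftrightarrow \bar{\partial}$). Once this bookkeeping is in place, the remainder is a textbook Hahn-Banach plus Riesz argument.
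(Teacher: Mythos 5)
Your argument is correct and follows essentially the same route as the paper's proof: Cauchy--Schwarz applied to the pairing $\langle u, T_\varphi^*\phi\rangle_\varphi = \langle f,\phi\rangle_\varphi$ for the direct implication, and a Hahn--Banach extension of the functional $T_\varphi^*\phi \mapsto \langle f,\phi\rangle_\varphi$ followed by Riesz representation for the converse. Your explicit attention to well-definedness of the functional and to the bookkeeping behind the pairing identity is a point the paper passes over silently, but it does not change the substance of the argument.
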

\begin{proof}
Let's set $H =  \alpha R + \beta \bar{\partial}^k +\gamma  \partial^k +  c$ and $H_\varphi^* = (\alpha R + \beta \bar{\partial}^k +\gamma  \partial^k +  c)_\varphi^*$.\\
Suppose that there exists  $u \in L^2(\mathbb{\C}, \mathrm{\textnormal{e}}^{- \varphi})$ such that $$ \alpha Ru + \beta \bar{\partial}^ku +\gamma  \partial^ku +  cu  = f$$ with the estimation of the norm
$$\vert \vert u \vert \vert_\varphi^2 \leq a$$ 
then we have
\begin{eqnarray*}
\vert <f, \phi>_\varphi \vert^2 = \vert <Hu, \phi>_\varphi \vert^2 & = & \vert <u, H_\varphi^* \phi>_\varphi \vert^2\\
& \leq & \vert \vert u \vert \vert_\varphi^2 \vert \vert H_\varphi^* \phi \vert \vert_\varphi^2, \;\; \mbox{by Cauchy-Schwarz}\\
& \leq & a \vert \vert H_\varphi^* \phi \vert \vert_\varphi^2 \\
\end{eqnarray*}
so  $$\vert <f, \phi>_\varphi \vert^2 \leq a \vert \vert (\alpha R + \beta \bar{\partial}^k +\gamma  \partial^k +  c)_\varphi^* \phi \vert \vert_\varphi ^2.$$
suppose that $$\vert <f, \phi>_\varphi \vert^2 \leq a \vert \vert (\alpha R + \beta \bar{\partial}^k +\gamma  \partial^k +  c)_\varphi^* \phi \vert \vert_\varphi ^2.$$
Consider the subspace
$$ E = \{ H_\varphi^* \phi : \phi \in C_0^\infty(\mathbb{C}) \} \subset L^2(\mathbb{\C}, \mathrm{\textnormal{e}}^{- \varphi}).$$
We define a linear map by
$$ L_f : E \rightarrow \mathbb{C}$$ with $$L_f(H_\varphi^* \phi) = <f, \phi>_\varphi = \int_\mathbb{C} \overline{f} \phi \mathrm{\textnormal{e}}^{- \varphi} d \sigma$$
or
$$\vert L_f(H_\varphi^* \phi) \vert = \vert <f, \phi>_\varphi \vert \leq \sqrt{a} \vert \vert H_\varphi^* \phi \vert \vert_\varphi$$
 $L_f$ is bounded in $E$.\\
By the Hahn-Banach Extension Theorem, $L_f$ can be extended into an operator $\tilde{L}_f$ on $L^2(\mathbb{\C}, \mathrm{\textnormal{e}}^{- \varphi})$ with
$$\vert \tilde{L}_f(g)\vert \leq \sqrt{a} \vert \vert g \vert \vert_\varphi \; \; \; \forall \; \; g \in L^2(\mathbb{\C}, \mathrm{\textnormal{e}}^{- \varphi}).$$
According to the Riesz representation Theorem, there exists a unique $u_0 \in L^2(\mathbb{\C}, \mathrm{\textnormal{e}}^{- \varphi})$ such that
$$\tilde{L}_f(g) = <u_0, g>_\varphi \; \; \; \forall \; \; g \in L^2(\mathbb{\C}, \mathrm{\textnormal{e}}^{- \varphi})$$
so
$$\tilde{L}_f(H_\varphi^* \phi) = <u_0, H_\varphi^* \phi>_\varphi = <Hu_0,  \phi>_\varphi$$
or 
$$\tilde{L}_f(H_\varphi^* \phi) = L_f(H_\varphi^* \phi) =  <f,  \phi>_\varphi$$ on $E$ so
$$<Hu_0,  \phi>_\varphi = <f,  \phi>_\varphi$$
thus $$ Hu_0 = f$$
which implies
$$\alpha Ru_0  + \beta \bar{\partial}^ku_0  +\gamma  \partial^ku_0 + cu_0 = f$$
and
$$\vert \vert u_0 \vert \vert_\varphi ^2 = \vert  <u_0 ,  u_0 >_\varphi \vert = \vert \tilde{L}_f(u_0)\vert \leq \sqrt{a} \vert \vert u_0 \vert \vert_\varphi$$
so
$$ \vert \vert u_0 \vert \vert_\varphi ^2 \leq a$$
or $u_0 \in  L^2(\mathbb{\C}, \mathrm{\textnormal{e}}^{- \varphi})$
then let $u_0 = u$, so there exists  $u \in L^2(\mathbb{\C}, \mathrm{\textnormal{e}}^{- \varphi})$ such that
$$\alpha Ru + \beta \bar{\partial}^ku +\gamma  \partial^ku + cu = f$$
with
$$\vert \vert u \vert \vert_\varphi ^2 \leq a.$$
\end{proof}

\begin{lemma} \label{B} \item
$\vert \vert (\alpha R + \beta \bar{\partial}^k +\gamma  \partial^k  + c)_\varphi^* \phi \vert \vert_\varphi^2 = \vert \vert (\alpha R + \beta \bar{\partial}^k +\gamma  \partial^k  + c) \phi \vert \vert^2_\varphi + \alpha^2 <\phi,R( R^* \phi) - R^*(R \phi) >_\varphi +  \beta^2 <\phi, \bar{\partial}^{k}( \bar{\partial}^{k*} \phi) - \bar{\partial}^{k*} ( \bar{\partial}^{k} \phi) >_\varphi +  \gamma^2 <\phi, \partial^{k}( \partial^{k*} \phi) - \partial^{k*} ( \partial^{k} \phi) >_\varphi +  \alpha \beta <\phi,R( \bar{\partial}^{k*} \phi) +\bar{\partial}^{k}( R^* \phi)  - R^*( \bar{\partial}^{k} \phi) - \bar{\partial}^{k*}( R \phi) >_\varphi +  \alpha \gamma <\phi,R( \partial^{k*} \phi) +\partial^{k}( R^* \phi)  - R^*( \partial^{k} \phi) - \partial^{k*}( R \phi) >_\varphi  +  \beta \gamma <\phi, \bar{\partial}^{k}( \partial^{k*} \phi) +\partial^{k}( \bar{\partial}^{k*}  \phi)  - \bar{\partial}^{k*} ( \partial^{k} \phi) - \partial^{k*}( \bar{\partial}^{k} \phi) >_\varphi $\\
$\forall$ $\phi \in C_0^\infty(\mathbb{C})$.
\end{lemma}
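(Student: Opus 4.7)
The plan is to expand both $\| H^{*}_{\varphi} \phi \|_{\varphi}^{2}$ and $\| H \phi \|_{\varphi}^{2}$, where $H = \alpha R + \beta \bar{\partial}^{k} + \gamma \partial^{k} + c$, by the sesquilinearity of $\langle \cdot , \cdot \rangle_{\varphi}$, reduce every pairing to the canonical form $\langle \phi , (\cdot) \phi \rangle_{\varphi}$ via the formal-adjoint identity $\langle f , A g \rangle_{\varphi} = \langle A^{*}_{\varphi} f , g \rangle_{\varphi}$ recorded at the start of the section, and then subtract term by term. Since $\alpha, \beta, \gamma \in \mathbb{R}$, the two expansions are completely parallel with the same real scalar weight on each ordered pair $(A,B)$ of operators drawn from $\{ R, \bar{\partial}^{k}, \partial^{k}, cI \}$, which is what makes a term-by-term matching possible.

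Concretely, I would first write $\| H^{*}_{\varphi} \phi \|_{\varphi}^{2} = \sum_{A,B} \lambda_{A} \lambda_{B} \langle A^{*}_{\varphi} \phi , B^{*}_{\varphi} \phi \rangle_{\varphi}$ and $\| H \phi \|_{\varphi}^{2} = \sum_{A,B} \lambda_{A} \lambda_{B} \langle A \phi , B \phi \rangle_{\varphi}$, with $\lambda_{A} \in \{ \alpha, \beta, \gamma, c \}$ the coefficient corresponding to $A$. I then replace $\langle A^{*}_{\varphi} \phi , B^{*}_{\varphi} \phi \rangle_{\varphi}$ by $\langle \phi , A B^{*}_{\varphi} \phi \rangle_{\varphi}$ and $\langle A \phi , B \phi \rangle_{\varphi}$ by $\langle \phi , A^{*}_{\varphi} B \phi \rangle_{\varphi}$ via the adjoint identity. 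After subtraction, each diagonal $A = B$ with $A \in \{ R, \bar{\partial}^{k}, \partial^{k} \}$ yields the self-commutator $\langle \phi , A A^{*}_{\varphi} \phi - A^{*}_{\varphi} A \phi \rangle_{\varphi}$ with weight $\lambda_{A}^{2}$, while for each unordered off-diagonal pair $\{A,B\}$ with $A \neq B$, the contributions from the ordered pairs $(A,B)$ and $(B,A)$ combine into the symmetrized expression $\langle \phi , A B^{*}_{\varphi} \phi + B A^{*}_{\varphi} \phi - A^{*}_{\varphi} B \phi - B^{*}_{\varphi} A \phi \rangle_{\varphi}$ with weight $\lambda_{A} \lambda_{B}$. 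These are exactly the six terms weighted by $\alpha^{2}, \beta^{2}, \gamma^{2}, \alpha\beta, \alpha\gamma, \beta\gamma$ in the statement.

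The remaining bookkeeping concerns the pairs involving the scalar $cI$: the diagonal $(cI, cI)$ contributes $|c|^{2} \| \phi \|_{\varphi}^{2}$ on each side and drops out, and for every cross pair $(cI, A)$ or $(A, cI)$ with $A \in \{ R, \bar{\partial}^{k}, \partial^{k} \}$ the adjoint identity turns the contribution into a scalar multiple of $\langle \phi , A \phi \rangle_{\varphi}$ or $\langle \phi , A^{*}_{\varphi} \phi \rangle_{\varphi}$; under the paper's convention $(cI)^{*}_{\varphi} = cI$ these contributions are identical on both sides and so cancel. The main obstacle is not analytic but purely combinatorial: tracking all sixteen ordered pairs $(A,B)$, correctly assembling the off-diagonal symmetrizations, and verifying the clean cancellation of every $c$-cross term. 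Beyond sesquilinearity and the adjoint identity, no further tool is required.
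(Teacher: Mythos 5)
Your proposal is correct and is essentially the paper's own argument: the paper writes $\Vert H_\varphi^*\phi\Vert_\varphi^2=\langle\phi,HH_\varphi^*\phi\rangle_\varphi=\Vert H\phi\Vert_\varphi^2+\langle\phi,(HH_\varphi^*-H_\varphi^*H)\phi\rangle_\varphi$ and then expands the two operator products distributively, which is the same sixteen-pair bookkeeping (sesquilinearity plus the adjoint identity, with the $c$-terms and $c^2$-term cancelling) that you carry out by matching ordered pairs directly. The only difference is organizational — forming the commutator first versus subtracting the two expansions term by term — and both versions rest on the same convention $(cI)_\varphi^*=cI$ that you explicitly flag.
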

\begin{proof}
Let's set $H = \alpha \partial^k \bar{\partial}^{k} + \beta \bar{\partial}^k +\gamma  \partial^k  + c$ and \\$H_\varphi^* = (\alpha \partial^k \bar{\partial}^{k} + \beta \bar{\partial}^k +\gamma  \partial^k  + c)_\varphi^*$.\\
For all $\phi \in C_0^\infty(\mathbb{C})$
\begin{eqnarray*}
\vert \vert H_\varphi^* \phi \vert \vert^2_\varphi & = &  <H_\varphi^* \phi, H_\varphi^* \phi>_\varphi \\
& = &  < \phi, H H_\varphi^* \phi>_\varphi \\ 
& = &  < \phi,  H_\varphi^* H  \phi + H H_\varphi^* \phi - H_\varphi^* H  \phi>_\varphi\\
& = &  <H \phi,  H  \phi> + <\phi, H H_\varphi^* \phi - H_\varphi^* H  \phi>_\varphi \\
& = &   \vert \vert H \phi \vert \vert^2_\varphi + <\phi, H H_\varphi^* \phi - H_\varphi^* H_ \phi>_\varphi \\
\end{eqnarray*}
We have\\
\begin{eqnarray*}
H H_\varphi^* \phi & = & (\alpha R + \beta \bar{\partial}^k +\gamma  \partial^k  + c)(\alpha R + \beta \bar{\partial}^k +\gamma  \partial^k  + c)_\varphi^* \phi  \\
& = &  (\alpha R + \beta \bar{\partial}^k +\gamma  \partial^k  + c)( \alpha R^* + \beta \bar{\partial}^{k*} +\gamma  \partial^{k*}  + c) \phi  \\ & = & \alpha^2 R( R^* \phi) + \alpha \beta R( \bar{\partial}_\varphi^{k*}  \phi) + \alpha \gamma R( \partial_\varphi^{k*}  \phi)  + \alpha c R \phi + \alpha \beta \bar{\partial}^{k}( R^*  \phi) +\beta^2 \bar{\partial}^k(\bar{\partial}_\varphi^{k*}\phi) \\ & & + \beta \gamma \bar{\partial}^k(\partial_\varphi^{k*}\phi) + \beta c\bar{\partial}^{k}\phi + \gamma \alpha \partial^{k}(R^*\phi) + \gamma \beta  \partial^{k}(\bar{\partial}_\varphi^{k*} \phi) + \gamma^2 \partial^{k}( \partial_\varphi^{k*} \phi) + \gamma c \partial^{k} + \\ & & \alpha c R^* \phi + \beta c \bar{\partial}_\varphi^{k*} \phi  + \gamma c \partial_\varphi^{k*} \phi+ c^2 \phi  \\
\end{eqnarray*}
Similary we have, 
\begin{eqnarray*}
H_\varphi^* H  \phi & = &  \alpha^2  R^*( R \phi) + \alpha \beta R^*( \bar{\partial}_\varphi^{k}  \phi) + \alpha \gamma R^*( \partial_\varphi^{k}  \phi)  + \alpha c R^* \phi + \alpha \beta \bar{\partial}^{k*}( R  \phi) + \beta^2 \bar{\partial}^{k*}(\bar{\partial}_\varphi^{k}\phi) \\ & &  + \beta \gamma \bar{\partial}^{k*}(\partial_\varphi^{k}\phi)+ \beta c\bar{\partial}^{k*}  \phi  + \gamma \alpha \partial^{k*}(R \phi) + \gamma \beta  \partial^{k*}(\bar{\partial}_\varphi^{k} \phi)  + \gamma^2 \partial^{k*}( \partial_\varphi^{k} \phi)  + \gamma c \partial^{k*} \\ & &  + \alpha c R \phi + \beta c \bar{\partial}_\varphi^{k} \phi  + \gamma c \partial_\varphi^{k} \phi+ c^2 \phi  
\end{eqnarray*}
so
\begin{eqnarray*}
H H_\varphi^* \phi - H_\varphi^* H  \phi & = & \alpha^2 (R( R^* \phi) - R^*(R \phi)) + \beta^2 ( \bar{\partial}^{k}( \bar{\partial}^{k*} \phi) - \bar{\partial}^{k*} ( \bar{\partial}^{k} \phi) ) + \gamma^2 ( \partial^{k}( \partial^{k*} \phi) - \partial^{k*} ( \partial^{k} \phi) ) \\ & &  + \alpha \beta (R( \bar{\partial}^{k*} \phi) +\bar{\partial}^{k}( R^* \phi) - R^*( \bar{\partial}^{k} \phi) - \bar{\partial}^{k*}( R \phi)) + \alpha \gamma (R( \partial^{k*} \phi) +\partial^{k}( R^* \phi) \\ & &  - R^*( \partial^{k} \phi)  - \partial^{k*}( R \phi)) + \beta \gamma( \bar{\partial}^{k}( \partial^{k*} \phi) +\partial^{k}( \bar{\partial}^{k*}  - \bar{\partial}^{k*} ( \partial^{k} \phi) - \partial^{k*}( \bar{\partial}^{k} \phi)) \phi)  
\end{eqnarray*}
thus
\begin{eqnarray*}
\vert \vert H_\varphi^* \phi \vert \vert^2_\varphi  & = & \vert \vert H \phi \vert \vert^2_\varphi +  \alpha^2 <\phi, R( R^* \phi) - R^*(R \phi) >_\varphi + \beta^2 <\phi, \bar{\partial}^{k}( \bar{\partial}^{k*} \phi) - \bar{\partial}^{k*} ( \bar{\partial}^{k} \phi) >_\varphi \\ & &   + \gamma^2 <\phi, \partial^{k}( \partial^{k*} \phi) - \partial^{k*} ( \partial^{k} \phi) >_\varphi + \alpha \beta <\phi, R( \bar{\partial}^{k*} \phi) +\bar{\partial}^{k}( R^* \phi)  - R^*( \bar{\partial}^{k} \phi) \\ & &   - \bar{\partial}^{k*}( R \phi) >_\varphi + \alpha \gamma <\phi,R( \partial^{k*} \phi) +\partial^{k}( R^* \phi)  - R^*( \partial^{k} \phi) - \partial^{k*}( R \phi) >_\varphi + \\ & & \beta  \gamma <\phi, \bar{\partial}^{k}( \partial^{k*} \phi) +\partial^{k}( \bar{\partial}^{k*}  \phi)  - \bar{\partial}^{k*} ( \partial^{k} \phi) - \partial^{k*}( \bar{\partial}^{k} \phi) >_\varphi 
\end{eqnarray*}
thus
\begin{eqnarray*}
\vert \vert (\alpha R + \beta \bar{\partial}^k +\gamma  \partial^k  + c)_\varphi^* \phi \vert \vert^2_\varphi & = & \vert \vert \alpha R + \beta \bar{\partial}^k +\gamma  \partial^k  + c \phi \vert \vert^2_\varphi + \alpha^2 <\phi,R( R^* \phi) - R^*(R \phi) >_\varphi + \\ & & \beta^2 <\phi, \bar{\partial}^{k}( \bar{\partial}^{k*} \phi) - \bar{\partial}^{k*} ( \bar{\partial}^{k} \phi) >_\varphi + \gamma^2 <\phi, \partial^{k}( \partial^{k*} \phi) - \partial^{k*} ( \partial^{k} \phi) >_\varphi \\ & &  + \alpha \beta<\phi, R( \bar{\partial}^{k*} \phi) +\bar{\partial}^{k}( R^* \phi)  - R^*( \bar{\partial}^{k} \phi) - \bar{\partial}^{k*}( R \phi) >_\varphi + \\ & & \alpha \gamma <\phi,R( \partial^{k*} \phi) +\partial^{k}( R^* \phi)  - R^*( \partial^{k} \phi) - \partial^{k*}( R \phi) >_\varphi  +  \\ & & \beta \gamma <\phi, \bar{\partial}^{k}( \partial^{k*} \phi) +\partial^{k}( \bar{\partial}^{k*}  \phi)  - \bar{\partial}^{k*} ( \partial^{k} \phi) - \partial^{k*}( \bar{\partial}^{k} \phi) >_\varphi 
\end{eqnarray*}
\end{proof}
\begin{lemma} \label{C}
For all $\phi \in C_0^\infty(\mathbb{C})$, we have by setting : 

$$
\left\{
    \begin{array}{lll}
        C_k^i  C_k^j C_k^l = O_{i,j,l,k}\\
        \mbox{and}\\
        C_k^i  C_k^j = O_{i,j,k} & 
    \end{array}
\right.
$$

\begin{eqnarray*}
1) <\phi,  R( \partial_\varphi^{k*}  \phi) - \partial_\varphi^{k*} (  R \phi)>_\varphi & = & <\phi, (-1)^k \sum_{i,j,l=1}^k O_{i,j,l,k}  (\partial^{k-l}\bar{\partial}^{k-i}\bar{\partial}^{k-j}\phi)\times \partial^l \bar{\partial}^j (\mathrm{\textnormal{e}}^{ \varphi} \bar{\partial}^i  \mathrm{\textnormal{e}}^{- \varphi})>\\
2) <\phi,  R( \bar\partial_\varphi^{k*}  \phi) - \bar\partial_\varphi^{k*} (  R \phi)>_\varphi & = & <\phi, (-1)^k \sum_{i,j,l=1}^k O_{i,j,l,k} (\partial^{k-l}\partial^{k-i}\bar{\partial}^{k-j}\phi)\times \partial^l \bar{\partial}^j (\mathrm{\textnormal{e}}^{ \varphi} \partial^i  \mathrm{\textnormal{e}}^{- \varphi})>\\
3) <\phi,  \partial^k ( R^* \phi) - R^*(  \partial^k  \phi)>_\varphi & = & <\phi,  \sum_{i,j,l=1}^k O_{i,j,l,k} (\partial^{k-l}\partial^{k-i}\bar{\partial}^{k-j}\phi) \times \partial^l (\mathrm{\textnormal{e}}^{ \varphi} \partial^i \bar{\partial}^j  \mathrm{\textnormal{e}}^{- \varphi})>\\
4) <\phi,  \bar{\partial}^k ( R^* \phi) - R^*(  \bar{\partial}^k  \phi)>_\varphi & = & <\phi,  \sum_{i,j=,l=1}^k O_{i,j,l,k}  (\bar{\partial}^{k-l}\partial^{k-i}\bar{\partial}^{k-j}\phi)  \bar{\partial}^l (\mathrm{\textnormal{e}}^{ \varphi} \partial^i \bar{\partial}^j  \mathrm{\textnormal{e}}^{- \varphi})>\\
5) <\phi,  \partial^k ( \bar{\partial}_\varphi^{k*} \phi) - \bar{\partial}_\varphi^{k*} (  \partial^k  \phi)>_\varphi & = & <\phi, (-1)^k  \sum_{i,j=1}^k  O_{i,j,k}  (\partial^{k-j} \partial^{k-i}\phi) \times \partial^j (\mathrm{\textnormal{e}}^{ \varphi} \partial^i  \mathrm{\textnormal{e}}^{- \varphi})>\\
6) <\phi,  \bar{\partial}^k ( \partial_\varphi^{k*} \phi) - \partial_\varphi^{k*} (  \bar{\partial}^k  \phi)>_\varphi & = & <\phi, (-1)^k  \sum_{i,j=1}^k  O_{i,j,k} (\bar{\partial}^{k-j} \bar{\partial}^{k-i}\phi) \times  \bar{\partial}^j (\mathrm{\textnormal{e}}^{ \varphi} \bar{\partial}^i  \mathrm{\textnormal{e}}^{- \varphi})> .
  \end{eqnarray*}
\end{lemma}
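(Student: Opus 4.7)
The strategy is to establish each of the six commutator identities by direct computation, using only the explicit formulas for the formal adjoints derived in Section 2, namely
\[
\partial_\varphi^{k*}\phi = (-1)^k\mathrm{e}^{\varphi}\bar{\partial}^k(\phi\mathrm{e}^{-\varphi}),\quad \bar{\partial}_\varphi^{k*}\phi=(-1)^k\mathrm{e}^{\varphi}\partial^k(\phi\mathrm{e}^{-\varphi}),\quad R^*\phi = \mathrm{e}^{\varphi}\partial^k\bar{\partial}^k(\phi\mathrm{e}^{-\varphi}),
\]
together with the generalized Leibniz rule $D^k(fg)=\sum_{i=0}^{k}C_k^i(D^if)(D^{k-i}g)$ for $D\in\{\partial,\bar{\partial}\}$. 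I describe the argument for identity (1) in detail; the remaining five reduce to the same pattern.

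For identity (1), I would first rewrite $R(\partial_\varphi^{k*}\phi)=(-1)^k\partial^k\bar{\partial}^k\bigl[\mathrm{e}^{\varphi}\bar{\partial}^k(\phi\mathrm{e}^{-\varphi})\bigr]$ and expand by three successive Leibniz applications: one on the innermost $\bar{\partial}^k(\phi\mathrm{e}^{-\varphi})$, yielding the index $i$; one on the outer $\bar{\partial}^k$ acting on the resulting product with $\mathrm{e}^{\varphi}$, yielding the index $j$; and one on the outermost $\partial^k$, yielding the index $l$. This produces a triple sum over $i,j,l\in\{0,\dots,k\}$ whose typical summand is $C_k^iC_k^jC_k^l\,(\partial^{k-l}\bar{\partial}^{k-j}\bar{\partial}^{k-i}\phi)\cdot\partial^l\bar{\partial}^j(\mathrm{e}^{\varphi}\bar{\partial}^i\mathrm{e}^{-\varphi})$. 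In parallel, I would expand $\partial_\varphi^{k*}(R\phi)=(-1)^k\mathrm{e}^{\varphi}\bar{\partial}^k(\mathrm{e}^{-\varphi}\partial^k\bar{\partial}^k\phi)$ by a single Leibniz application. The terms with $(j,l)=(0,0)$ in the triple sum reproduce exactly the single-sum expansion of $\partial_\varphi^{k*}(R\phi)$ and therefore cancel in the difference. Furthermore, the terms with $i=0$ vanish identically because $\mathrm{e}^{\varphi}\bar{\partial}^0\mathrm{e}^{-\varphi}\equiv 1$, so every $\partial^l\bar{\partial}^j$-derivative of it with $(j,l)\neq(0,0)$ is zero. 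Thus the surviving indices satisfy $i,j,l\in\{1,\dots,k\}$, and pairing with $\phi$ in $\langle\cdot,\cdot\rangle_\varphi$ yields the right-hand side of identity (1).

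Identities (2)--(6) follow by the same scheme with only bookkeeping variations. In (2), $\partial_\varphi^{k*}$ is replaced by $\bar{\partial}_\varphi^{k*}$, so $\bar{\partial}^i\mathrm{e}^{-\varphi}$ is replaced by $\partial^i\mathrm{e}^{-\varphi}$ throughout. For (3) and (4), one commutes $R^*=\mathrm{e}^{\varphi}\partial^k\bar{\partial}^k(\cdot\,\mathrm{e}^{-\varphi})$ with a single outer $\partial^k$ or $\bar{\partial}^k$: the two inner Leibniz applications inside $\partial^k\bar{\partial}^k(\phi\mathrm{e}^{-\varphi})$ produce the pair of indices $(i,j)$, while the outer differentiation produces the index $l$, which accounts for the prefactors $\partial^l(\mathrm{e}^{\varphi}\partial^i\bar{\partial}^j\mathrm{e}^{-\varphi})$ and $\bar{\partial}^l(\mathrm{e}^{\varphi}\partial^i\bar{\partial}^j\mathrm{e}^{-\varphi})$. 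For (5) and (6), one commutes $\partial^k$ with $\bar{\partial}_\varphi^{k*}$ (respectively $\bar{\partial}^k$ with $\partial_\varphi^{k*}$), and only two Leibniz expansions are needed; this explains why only two indices appear and the coefficient collapses to $C_k^iC_k^j=O_{i,j,k}$.

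The principal obstacle is purely combinatorial: one has to track carefully which derivatives land on $\phi$ and which on $\mathrm{e}^{\pm\varphi}$ throughout the nested Leibniz expansions, keep the factors $(-1)^k$ coming from the formal adjoints under control, and identify the cancellation of the principal part together with the vanishing of terms in which no derivative of $\mathrm{e}^{-\varphi}$ appears. Once this bookkeeping is organized systematically, no further analytic input is needed: the compact support of $\phi$ validates every integration and differentiation under the integral sign, and the weighted inner product is handled by simply absorbing $\mathrm{e}^{-\varphi}$ into the measure.
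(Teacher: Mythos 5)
Your proposal follows essentially the same route as the paper's own proof: iterated Leibniz expansions producing the indices $i,j,l$ (respectively $i,j$ for items 5 and 6), cancellation of the $(j,l)=(0,0)$ block against the single Leibniz expansion of $\partial_\varphi^{k*}(R\phi)$, and discarding the $i=0$ terms because $\mathrm{e}^{\varphi}\bar{\partial}^{0}\mathrm{e}^{-\varphi}\equiv 1$. The only caveat is that your final restriction of all indices to $\{1,\dots,k\}$ (rather than $i\geq 1$ together with $(j,l)\neq(0,0)$, which leaves the mixed terms where exactly one of $j,l$ vanishes) is asserted with the same brevity as in the paper, so your argument is at the level of the published proof.
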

\begin{proof}\vskip 0.5mm
1)Let recall that :
$$
\partial_\varphi^{k*} ( \phi)  = (-1)^k \mathrm{\textnormal{e}}^{ \varphi}  \bar{\partial}^k ( \phi \mathrm{\textnormal{e}}^{- \varphi})\;\;\mbox{with}\;\;\bar{\partial}^k ( \phi \mathrm{\textnormal{e}}^{- \varphi}) = \sum_{i=0}^k C_k^i    \bar{\partial}^{k-i}\phi (\bar{\partial}^i \mathrm{\textnormal{e}}^{- \varphi}) 
$$
$$
\Rightarrow \bar\partial_\varphi^{k*} ( \phi) = (-1)^k  \sum_{i=0}^k  C_k^i  \bar{\partial}^{k-i}\phi (\mathrm{\textnormal{e}}^{ \varphi} \bar{\partial}^i  \mathrm{\textnormal{e}}^{- \varphi})\;\;\mbox{and}\;\;\partial^k\left(\partial_\varphi^{k*} ( \phi)\right) = \bar{\partial}^{k}\left[ (-1)^k  \sum_{i=0}^k  C_k^i  \bar{\partial}^{k-i}\phi (\mathrm{\textnormal{e}}^{ \varphi} \bar{\partial}^i  \mathrm{\textnormal{e}}^{- \varphi})\right] .
$$
By applying the same principle of derivation to the order $k$ of the product of two functions, we have :
\begin{eqnarray*}
\bar\partial^k\left(\partial_\varphi^{k*} ( \phi)\right) & = & \bar{\partial}^{k}\left[ (-1)^k  \sum_{i=0}^k  C_k^i  \bar{\partial}^{k-i}\phi (\mathrm{\textnormal{e}}^{ \varphi} \bar{\partial}^i  \mathrm{\textnormal{e}}^{- \varphi})\right]\\
& = & (-1)^k  \sum_{i=0}^k  C_k^i\bar{\partial}^{k} \left[\bar{\partial}^{k-i}\phi (\mathrm{\textnormal{e}}^{ \varphi} \bar{\partial}^i  \mathrm{\textnormal{e}}^{- \varphi})\right]\\
& = & (-1)^k  \sum_{i,j=0}^k C_k^i C_k^j\left(\bar{\partial}^{k-j}\bar{\partial}^{k-i}\phi\right)\bar{\partial}^{j}(\mathrm{\textnormal{e}}^{ \varphi} \bar{\partial}^i  \mathrm{\textnormal{e}}^{- \varphi})\\
& = & (-1)^k  \sum_{i,j=0}^k O_{i,j,k}\left(\bar{\partial}^{k-j}\bar{\partial}^{k-i}\phi\right)\bar{\partial}^{j}(\mathrm{\textnormal{e}}^{ \varphi} \bar{\partial}^i  \mathrm{\textnormal{e}}^{- \varphi}).
\end{eqnarray*}
Then we have :
\begin{eqnarray*}
R ( \partial_\varphi^{k*} ( \phi) ) & = & \partial^k \left[  (-1)^k  \sum_{i,j=0}^k O_{i,j,k}\left(\bar{\partial}^{k-j}\bar{\partial}^{k-i}\phi\right)\bar{\partial}^{j}(\mathrm{\textnormal{e}}^{ \varphi} \bar{\partial}^i  \mathrm{\textnormal{e}}^{- \varphi})\right]  \\  
& = & (-1)^k  \sum_{i,j=0}^k  C_k^i  C_k^j \partial^k \left[ (\bar{\partial}^{k-j} \bar{\partial}^{k-i}\phi)  \bar{\partial}^j (\mathrm{\textnormal{e}}^{ \varphi} \bar{\partial}^i  \mathrm{\textnormal{e}}^{- \varphi})\right] \\
& = & (-1)^k  \sum_{i,j,l=0}^k  C_k^l O_{i,j,k} (\partial^{k-l}\bar{\partial}^{k-j} \bar{\partial}^{k-i}\phi) \partial^{l} \bar{\partial}^j (\mathrm{\textnormal{e}}^{ \varphi} \bar{\partial}^i  \mathrm{\textnormal{e}}^{- \varphi})\\
& = & (-1)^k  \sum_{i,j,l=0}^k  O_{i,j,l,k} (\partial^{k-l}\bar{\partial}^{k-j} \bar{\partial}^{k-i}\phi) \partial^{l} \bar{\partial}^j (\mathrm{\textnormal{e}}^{ \varphi} \bar{\partial}^i  \mathrm{\textnormal{e}}^{- \varphi}) .
 \end{eqnarray*}
Note that if :
$$
\partial_\varphi^{k*} ( \phi)  = (-1)^k \mathrm{\textnormal{e}}^{ \varphi}  \bar{\partial}^k ( \phi \mathrm{\textnormal{e}}^{- \varphi})\;\;\mbox{then}\;\;\partial_\varphi^{k*}(R  ( \phi) ) = (-1)^k \mathrm{\textnormal{e}}^{ \varphi}  \bar{\partial}^k ( R \phi  \mathrm{\textnormal{e}}^{- \varphi}) 
$$
with $\bar{\partial}^k ( R \phi  \mathrm{\textnormal{e}}^{- \varphi}) = \sum_{i=0}^k C_k^i    \bar{\partial}^{k-i}R \phi (  \bar{\partial}^i \mathrm{\textnormal{e}}^{- \varphi})$ . Therefore :

 $$ \partial_\varphi^{k*}(R  ( \phi) ) =  (-1)^k  \sum_{i=0}^k C_k^i    \bar{\partial}^{k-i}\partial^k \bar{\partial}^k \phi (\mathrm{\textnormal{e}}^{ \varphi}  \bar{\partial}^i \mathrm{\textnormal{e}}^{- \varphi})$$
So  
 $$R ( \partial_\varphi^{k*} ( \phi) ) - \partial_\varphi^{k*}(R  ( \phi) )  = (-1)^k  \sum_{i,j,l=1}^k  O_{i,j,l,k} (\partial^{k-l}\bar{\partial}^{k-j} \bar{\partial}^{k-i}\phi) \partial^{l} \bar{\partial}^j (\mathrm{\textnormal{e}}^{ \varphi} \bar{\partial}^i  \mathrm{\textnormal{e}}^{- \varphi})$$
Thus
 \begin{eqnarray*}
<\phi,  R( \bar{\partial}_\varphi^{k*}  \phi) - \bar{\partial}_\varphi^{k*} (  R \phi)>_\varphi & = & <\phi, (-1)^k \sum_{i,j,l=1}^k  O_{i,j,l,k} (\partial^{k-l}\partial^{k-i}\bar{\partial}^{k-j}\phi)\times \partial^l \bar{\partial}^j (\mathrm{\textnormal{e}}^{ \varphi} \partial^i  \mathrm{\textnormal{e}}^{- \varphi})> 
\end{eqnarray*}
\vskip 5mm
2)Applying the same technique as before, we have :
$$
R( \bar\partial_\varphi^{k*}  \phi) = (-1)^k  \sum_{i,j,l=0}^k  O_{i,j,l,k} (\partial^{k-l}\bar{\partial}^{k-j} \partial^{k-i}\phi) \partial^{l} \bar{\partial}^j (\mathrm{\textnormal{e}}^{ \varphi} \partial^i  \mathrm{\textnormal{e}}^{- \varphi})
$$
because
$$
\bar\partial_\varphi^{k*} ( \phi)  = (-1)^k \mathrm{\textnormal{e}}^{ \varphi} \partial^k ( \phi \mathrm{\textnormal{e}}^{- \varphi})\;\;\mbox{and}\;\;\partial^k ( \phi \mathrm{\textnormal{e}}^{- \varphi}) = \sum_{i=0}^k C_k^i \partial^{k-i}\phi (\partial^i \mathrm{\textnormal{e}}^{- \varphi}) 
$$
and
$$
\partial_\varphi^{k*}(R  ( \phi) ) =  (-1)^k  \sum_{i=0}^k C_k^i  \partial^{k-i}\partial^k \bar{\partial}^k \phi (\mathrm{\textnormal{e}}^{ \varphi}  \partial^i \mathrm{\textnormal{e}}^{- \varphi})
$$
because if :
$$
\bar\partial_\varphi^{k*} ( \phi)  = (-1)^k \mathrm{\textnormal{e}}^{ \varphi} \partial^k ( \phi \mathrm{\textnormal{e}}^{- \varphi})\;\;\mbox{then}\;\;\bar\partial_\varphi^{k*}(R  ( \phi) ) = (-1)^k \mathrm{\textnormal{e}}^{ \varphi}  \partial^k ( R \phi  \mathrm{\textnormal{e}}^{- \varphi})
$$
with $\partial^k ( R \phi \mathrm{\textnormal{e}}^{- \varphi}) = \sum_{i=0}^k C_k^i    \partial^{k-i}R \phi (\partial^i \mathrm{\textnormal{e}}^{- \varphi})$ . So
$$
R( \bar\partial_\varphi^{k*}  \phi) - \bar\partial_\varphi^{k*} (  R \phi) = (-1)^k  \sum_{i,j,l=1}^k  O_{i,j,l,k} (\partial^{k-l}\bar{\partial}^{k-j} \partial^{k-i}\phi) \partial^{l} \bar{\partial}^j (\mathrm{\textnormal{e}}^{ \varphi} \partial^i  \mathrm{\textnormal{e}}^{- \varphi})
$$
 Thus
$$
<\phi,  R( \bar\partial_\varphi^{k*}  \phi) - \bar\partial_\varphi^{k*} (  R \phi)>_\varphi  =  <\phi, (-1)^k \sum_{i,j,l=1}^k O_{i,j,l,k} (\partial^{k-l}\bar{\partial}^{k-j}\partial^{k-i}\phi)\times \partial^l \bar{\partial}^j (\mathrm{\textnormal{e}}^{ \varphi} \partial^i  \mathrm{\textnormal{e}}^{- \varphi})>_\varphi .
$$
\vskip 5mm
3)We have :
$$\partial^k (\bar{\partial}_\varphi^{k*}   \partial_\varphi^{k*} ( \phi) ) =   \sum_{i,j,l=0}^k  O_{i,j,l,k} (\partial^{k-l}\partial^{k-j} \bar{\partial}^{k-i}\phi)  \partial^{l}(\mathrm{\textnormal{e}}^{ \varphi}\partial^{j} \bar{\partial}^i  \mathrm{\textnormal{e}}^{- \varphi})$$
and
$$\bar{\partial}_\varphi^{k*}   \partial_\varphi^{k*} (\partial^k ( \phi) ) =  \sum_{i,j=0}^k  O_{i,j,k}  (\partial^{k-j} \bar{\partial}^{k-i}\partial^{k}\phi)  (\mathrm{\textnormal{e}}^{ \varphi}\partial^{j} \bar{\partial}^i  \mathrm{\textnormal{e}}^{- \varphi})$$
because :
$$
R^*\phi = \mathrm{\textnormal{e}}^{ \varphi}R(\phi\mathrm{\textnormal{e}}^{ -\varphi})\;\;\mbox{with}\;\;\bar\partial^{k}(\phi\mathrm{\textnormal{e}}^{ -\varphi}) = \sum_{i = 0}^k C_{k}^{i}\bar\partial^{k - i}\phi(\bar\partial^{i}\mathrm{\textnormal{e}}^{ -\varphi}).
$$
So
$$\partial^k (R^* ( \phi) ) - R^* (\partial^k ( \phi) )  =\sum_{i,j,l=1}^k  O_{i,j,l,k} (\partial^{k-l}\partial^{k-j} \bar{\partial}^{k-i}\phi)  \partial^{l}(\mathrm{\textnormal{e}}^{ \varphi}\partial^{j} \bar{\partial}^i  \mathrm{\textnormal{e}}^{- \varphi}).$$
Thus
\begin{eqnarray*}
<\phi,  \partial^k ( R^* \phi) - R^*  \phi)>_\varphi & = & <\phi,  \sum_{i,j,l=1}^k O_{i,j,l,k}  (\partial^{k-l}\bar\partial^{k-j}\partial^{k-i}\phi) \times  \partial^l (\mathrm{\textnormal{e}}^{ \varphi} \partial^i \bar{\partial}^j  \mathrm{\textnormal{e}}^{- \varphi})>_\varphi .
\end{eqnarray*}
\vskip 5mm
4)We deduce from the previous result the following result 
\begin{eqnarray*}
<\phi,  R^*\partial_\varphi^{k*} \phi) - R^*(  \bar{\partial}^k  \phi)>_\varphi & = & <\phi,  \sum_{i,j,l=1}^k O_{i,j,l,k}  (\bar{\partial}^{k-l}\partial^{k-i}\bar{\partial}^{k-j}\phi) \times \bar{\partial}^l (\mathrm{\textnormal{e}}^{ \varphi} \partial^i \bar{\partial}^j  \mathrm{\textnormal{e}}^{- \varphi})>_\varphi 
\end{eqnarray*}
using the same computation techniques.
\vskip 5mm
5)Not that 
\begin{eqnarray*}
\bar{ \partial}_\varphi^{k*} ( \phi) & = &(-1)^k \mathrm{\textnormal{e}}^{ \varphi}  \partial^k ( \phi \mathrm{\textnormal{e}}^{- \varphi})\\ 
 & = &(-1)^k  \sum_{i=0}^k C_k^i    \partial^{k-i}\phi (\mathrm{\textnormal{e}}^{ \varphi}  \partial^i \mathrm{\textnormal{e}}^{- \varphi})  
 \end{eqnarray*}
 \begin{eqnarray*}
\Longrightarrow \bar{\partial}^k ( \partial_\varphi^{k*} ( \phi) )& = &(-1)^k \sum_{i=0}^k C_k^i   \bar{\partial}^k [ \bar{\partial}^{k-i}\phi (\mathrm{\textnormal{e}}^{ \varphi}  \bar{\partial}^i \mathrm{\textnormal{e}}^{- \varphi}) ]\\ 
& = & (-1)^k  \sum_{i,j=0}^k  O_{i,j,l,k}  (\bar{\partial}^{k-j} \bar{\partial}^{k-i}\phi)  \bar{\partial}^j (\mathrm{\textnormal{e}}^{ \varphi} \bar{\partial}^i  \mathrm{\textnormal{e}}^{- \varphi})
  \end{eqnarray*}
  and
  $$\partial_\varphi^{k*} (\bar{\partial}^k ( \phi) ) = (-1)^k  \sum_{i=0}^k  C_k^i  (\bar{\partial}^{k-i} \bar{\partial}^{k}\phi) (\mathrm{\textnormal{e}}^{ \varphi} \bar{\partial}^i  \mathrm{\textnormal{e}}^{- \varphi})$$
  so
  $$\bar{\partial}^k ( \partial_\varphi^{k*} ( \phi) ) -\partial_\varphi^{k*} (\bar{\partial}^k ( \phi) ) = (-1)^k  \sum_{i,j=1}^k  O_{i,j,k}  (\bar{\partial}^{k-j} \bar{\partial}^{k-i}\phi)  \bar{\partial}^j (\mathrm{\textnormal{e}}^{ \varphi} \bar{\partial}^i  \mathrm{\textnormal{e}}^{- \varphi}).$$ 
  Thus
  \begin{eqnarray*}
  <\phi,  \bar{\partial}^k ( \partial_\varphi^{k*} \phi) - \partial_\varphi^{k*} (  \bar{\partial}^k  \phi)>_\varphi & = & <\phi, (-1)^k  \sum_{i,j=1}^k  O_{i,j,k} (\bar{\partial}^{k-j} \bar{\partial}^{k-i}\phi) \times \bar{\partial}^j (\mathrm{\textnormal{e}}^{ \varphi} \bar{\partial}^i  \mathrm{\textnormal{e}}^{- \varphi})>_\varphi .
  \end{eqnarray*}
\vskip 5mm
6) We deduce from the previous result that :
   \begin{eqnarray*}
  <\phi,  \partial^k ( \bar{\partial}_\varphi^{k*} \phi) - \bar{\partial}_\varphi^{k*} (  \partial^k  \phi)>_\varphi & = & <\phi, (-1)^k  \sum_{i,j=1}^k  O_{i,j,k} (\partial^{k-j} \partial^{k-i}\phi) \times \partial^j (\mathrm{\textnormal{e}}^{ \varphi} \partial^i  \mathrm{\textnormal{e}}^{- \varphi})>_\varphi 
  \end{eqnarray*}
\end{proof}
\begin{lemma} \label{E}
Let $\varphi = \vert z \vert ^2$ then
\begin{eqnarray*}
1) <\phi,  R( \bar{\partial}_\varphi^{k*}  \phi) - \bar{\partial}_\varphi^{k*} (  R \phi)>_\varphi = 0\\
2) <\phi,  R( \partial_\varphi^{k*}  \phi) - \partial_\varphi^{k*} (  R \phi)>_\varphi = 0\\
3)\;\;\; <\phi,  \partial^k ( R\phi) - R (  \partial^k  \phi)>_\varphi =0\\
4) <\phi,  \bar{\partial}^k ( \partial_\varphi^{k*} \phi) - \partial_\varphi^{k*} (  \bar{\partial}^k  \phi)>_\varphi =0\\ 
5) <\phi, \partial^k( R^* \phi) - R^*(  \partial^k  \phi)>  & = & <\phi,  \sum_{i=1}^k \sum_{n=0}^i \sum_{j=i-n}^k  \sum_{l=1}^{j-i+n} 
O_{i,j,l,n,k} (\partial^{k-l} \partial^{k-i} 
\bar{\partial}^{k-j} \phi)\times \\&& (-1)^{j+n}    
\frac{j!}{(j-i+n-l)!}\times(z)^{j-i+n-l} (\bar{z})^{n}>\\
6) <\phi, \bar{\partial}^k( R^* \phi) - R^*(  \bar{\partial}^k  \phi)>  & = & <\phi,  \sum_{i=1}^k \sum_{n=0}^i \sum_{j=i-n}^k  \sum_{l=1}^{j-i+n} 
O_{i,j,l,n,k} \times (\bar{\partial}^{k-l} \partial^{k-i} 
\bar{\partial}^{k-j} \phi)\times \\& & (-1)^{j+n}    
\frac{j!}{(j-i+n)!}\frac{n!}{(n-l)!}\times (z)^{j-i+n} (\bar{z})^{n-l)}>\\
  \end{eqnarray*}
  with
$$
C_k^i  C_k^j C_k^l C_{k}^{n} = O_{i,j,l,n,k} .
$$
\end{lemma}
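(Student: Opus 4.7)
The plan is to specialize the general commutator formulas from Lemma \ref{C} to the weight $\varphi(z)=|z|^2$, where the building blocks $\mathrm{e}^{\varphi}\bar{\partial}^{i}\mathrm{e}^{-\varphi}$ and $\mathrm{e}^{\varphi}\partial^{i}\mathrm{e}^{-\varphi}$ take a very simple form. The first step is to record
\[
\mathrm{e}^{|z|^2}\,\bar{\partial}^{i}\mathrm{e}^{-|z|^2}=(-z)^{i},\qquad \mathrm{e}^{|z|^2}\,\partial^{i}\mathrm{e}^{-|z|^2}=(-\bar z)^{i},
\]
which follow at once from $\bar{\partial}\mathrm{e}^{-|z|^2}=-z\,\mathrm{e}^{-|z|^2}$ and $\partial\mathrm{e}^{-|z|^2}=-\bar z\,\mathrm{e}^{-|z|^2}$ by induction on $i$. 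The first is a holomorphic polynomial in $z$; the second is anti-holomorphic in $\bar z$.

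Items (1), (2) and (4) then follow by direct substitution into Lemma \ref{C} together with a one-line observation. For (1), formula (2) of Lemma \ref{C} produces the kernel $\partial^{l}\bar{\partial}^{j}\bigl((-\bar z)^{i}\bigr)$; since $\bar{\partial}^{j}((-\bar z)^{i})$ is a polynomial in $\bar z$ alone, the outer $\partial^{l}$ annihilates it for every $l\ge 1$, so the sum is zero. For (2), formula (1) of Lemma \ref{C} gives $\partial^{l}\bar{\partial}^{j}\bigl((-z)^{i}\bigr)=0$ since $\bar{\partial}^{j}$ kills a holomorphic polynomial for $j\ge 1$. Item (4) uses formula (6) of Lemma \ref{C}, whose kernel $\bar{\partial}^{j}\bigl((-z)^{i}\bigr)$ vanishes for $j\ge 1$ for the same reason. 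Item (3) is even more direct: since $\partial$ and $\bar{\partial}$ commute, $\partial^{k}R=\partial^{2k}\bar{\partial}^{k}=R\,\partial^{k}$, so the commutator vanishes before any pairing is taken.

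Items (5) and (6) require an explicit Leibniz expansion of the mixed kernel. I would first derive
\[
\mathrm{e}^{|z|^2}\,\partial^{i}\bar{\partial}^{j}\mathrm{e}^{-|z|^2}=(-1)^{j}\sum_{m=0}^{\min(i,j)}\binom{i}{m}\frac{j!}{(j-m)!}\,z^{j-m}(-\bar z)^{i-m}
\]
by applying $\bar{\partial}^{j}$ to $\mathrm{e}^{-|z|^2}$ and then expanding $\partial^{i}$ via Leibniz. Inserting this into formula (3) of Lemma \ref{C} for item (5), the outer $\partial^{l}$ only differentiates the factor $z^{j-m}$, giving $\frac{j!}{(j-m-l)!}z^{j-m-l}$; the change of index $n:=i-m$ then recasts the inner triple sum into the announced form, and the ranges $0\le n\le i$, $j\ge i-n$, $1\le l\le j-i+n$ emerge exactly from the non-triviality of the falling factorials. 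Item (6) is symmetric via formula (4) of Lemma \ref{C}: $\bar{\partial}^{l}$ now acts on $(-\bar z)^{i-m}$ and, after the same substitution, produces the additional factor $\frac{n!}{(n-l)!}\bar z^{\,n-l}$.

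The main obstacle is the combinatorial bookkeeping in items (5) and (6): one must carefully reorder four nested sums and verify that the stated index ranges correspond exactly to the surviving terms of the Leibniz expansion. Everything else reduces to the single observation that when $\varphi=|z|^{2}$ the quantities $\mathrm{e}^{\varphi}\bar{\partial}^{i}\mathrm{e}^{-\varphi}$ and $\mathrm{e}^{\varphi}\partial^{i}\mathrm{e}^{-\varphi}$ are polynomials in one variable only, which immediately kills the commutators appearing in (1)--(4).
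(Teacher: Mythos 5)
Your proposal is correct and follows essentially the same route as the paper: compute $\mathrm{e}^{|z|^2}\bar{\partial}^{i}\mathrm{e}^{-|z|^2}=(-z)^{i}$ and $\mathrm{e}^{|z|^2}\partial^{i}\mathrm{e}^{-|z|^2}=(-\bar z)^{i}$, observe these are purely holomorphic/anti-holomorphic so the kernels in Lemma \ref{C} vanish for items (1)--(4), and expand $\partial^{i}\bar{\partial}^{j}\mathrm{e}^{-|z|^2}$ by Leibniz to obtain the explicit sums in items (5)--(6). The only cosmetic differences are that you obtain the Gaussian derivatives by direct induction where the paper invokes Fa\`a di Bruno's formula, and you dispose of item (3) by the trivial commutation $\partial^{k}R=R\,\partial^{k}$ rather than the weighted-adjoint computation the paper carries out.
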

\begin{proof}
Let $h(g) = \mathrm{\textnormal{e}}^{g}$ with $g = - \varphi$, according to the formula of Fa\`{a} di Bruno in \cite{2}, we have
 \begin{eqnarray*}
\bar{\partial}^i \mathrm{\textnormal{e}}^{g} & = & \bar{\partial}^i(h(g))\\
 & = &  \sum \frac{i!}{m_1!m_2! \cdots m_i!}h^{(m_1 + \cdots + m_i)}(g)\prod_{\gamma=1}^i (\frac{\bar{\partial}^\gamma g}{\gamma !})^{m_\gamma}\\ 
 & = &  (\sum \frac{i!}{m_1!m_2! \cdots m_i!} \prod_{\gamma=1}^i (\frac{-\bar{\partial}^\gamma \varphi}{\gamma !})^{m_\gamma})\mathrm{\textnormal{e}}^{- \varphi}\\
 & =: & P_i \mathrm{\textnormal{e}}^{- \varphi}\\
 \end{eqnarray*}
 where the sum runs over all $i$-tuples of positive integers $(m_1, m_2, \cdots , m_i)$ satisfying the constraint 
$$
1m_1 + 2m_2+ \cdots + im_i = i
$$.
 Let $\varphi = \vert z \vert ^2$, we have
 \[ 
 \bar{\partial}^\gamma \varphi = \left \{
 \begin{array}{rl}
 z & \mbox{ if } \gamma = 1\\
 0 & \mbox{ if } \gamma \geq 2
 \end{array}
 \right.
 \]
 $$P_i = (-\bar{\partial}^\gamma \varphi)^i = (-z)^i = (-1)^i (z)^i$$
 $$ \bar{\partial}^i \mathrm{\textnormal{e}}^{-\vert z \vert ^2} =  (-1)^i (z)^i \mathrm{\textnormal{e}}^{-\vert z \vert ^2}.$$
We begin in this proof to show item 4). We thus have previous computations :\\
\vskip 0.25cm
 \begin{equation*} \label{D1}
   \mathrm{\textnormal{e}}^{\vert z \vert ^2}\bar{\partial}^i \mathrm{\textnormal{e}}^{-\vert z \vert ^2} = (-1)^i (z)^i
  \end{equation*}
  \begin{equation*} \label{D2}
  \bar{\partial}^j ( \mathrm{\textnormal{e}}^{\vert z \vert ^2}\bar{\partial}^i \mathrm{\textnormal{e}}^{-\vert z \vert ^2}) = 0
  \end{equation*}
  thus
  $$<\phi,  \bar{\partial}^k ( \partial_\varphi^{k*} \phi) - \partial_\varphi^{k*} (  \bar{\partial}^k  \phi)>_\varphi =0.$$
\vskip 0.5mm
3)  We have
  $$ \partial^i \mathrm{\textnormal{e}}^{-\vert z \vert ^2} =  (-1)^i (\bar{z})^i \mathrm{\textnormal{e}}^{-\vert z \vert ^2}$$
 Thus
 \begin{equation*} \label{D3}
   \mathrm{\textnormal{e}}^{\vert z \vert ^2}\partial^i \mathrm{\textnormal{e}}^{-\vert z \vert ^2} = (-1)^i (\bar{z})^i
  \end{equation*}
  \begin{equation*} \label{D4}
  \partial^j ( \mathrm{\textnormal{e}}^{\vert z \vert ^2}\partial^i \mathrm{\textnormal{e}}^{-\vert z \vert ^2}) = 0
  \end{equation*}
  thus
  $$<\phi,  \partial^k ( \partial_\varphi^{k*} \phi) - \partial_\varphi^{k*} ( \partial^k  \phi)>_\varphi = 0.$$\\
\vskip 0.5mm  
1)  We have
   \[ 
\bar{ \partial}^{j} (\mathrm{\textnormal{e}}^{\vert z \vert ^2}\partial^i \mathrm{\textnormal{e}}^{-\vert z \vert ^2}) = \left \{
 \begin{array}{lr}
 0 & \mbox{ if } i < j \\ & \\
(-1)^i \frac{i!}{(i-j)!} (\bar{z})^{i-j} & \mbox{ if } i \geq j
 \end{array}
 \right.
 \]
 so
 $$\partial^l\bar{ \partial}^{j} (\mathrm{\textnormal{e}}^{\vert z \vert ^2}\partial^i \mathrm{\textnormal{e}}^{-\vert z \vert ^2}) = 0$$
 thus
 $$<\phi,  R( \bar{\partial}_\varphi^{k*}  \phi) - \bar{\partial}_\varphi^{k*} (  R\phi)>_\varphi  =0.$$
 \, \\
 \vskip 0.5mm
We obtain in a similar way item 2).
\vskip 0.5mm
5) From the following relation 
 \begin{equation} \label{D4}
  \partial^i \bar{\partial}^j\mathrm{\textnormal{e}}^{-\vert z \vert ^2} = (-1)^j \partial^i((z)^j \mathrm{\textnormal{e}}^{-\vert z \vert ^2})
  \end{equation}
we obtain by explaining the writing of the derivative of the product of two functions, this new relation
  \begin{equation} \label{D5}
  \partial^i \bar{\partial}^j \mathrm{\textnormal{e}}^{-\vert z \vert ^2} = (-1)^j \sum_{n=0}^i  C_i^n  \partial^{i-n}((z)^j)\partial^n \mathrm{\textnormal{e}}^{-\vert z \vert ^2}
  \end{equation}
to which we apply the formula of Fa\`{a} di Bruno and we therefore obtain :
 $$P_j  = (-1)^j (z)^j$$
 $$ \partial^n \mathrm{\textnormal{e}}^{-\vert z \vert ^2} =  (-1)^n (\bar{z})^n \mathrm{\textnormal{e}}^{-\vert z \vert ^2}.$$
 Substituting in \ref{D5}, we have
 \begin{equation} \label{E1}
 \partial^i \bar{\partial}^j \mathrm{\textnormal{e}}^{-\vert z \vert ^2} = (-1)^j \sum_{n=0}^i (-1)^n C_i^n  \partial^{i-n}(z)^j (\bar{z})^n \mathrm{\textnormal{e}}^{-\vert z \vert ^2}
 \end{equation}
 or
 \[ 
 \partial^{i-n} (z)^j = \left \{
 \begin{array}{lr}
 0 & \mbox{ if } j < i-n \\ & \\
 \frac{j!}{(j-i+n)!} (z)^{j-i+n} & \mbox{ if } j \geq i-n
 \end{array}
 \right.
 \]
 By replacing new in \ref{E1}, we have
 \[ 
 \partial^i \bar{\partial}^j \mathrm{\textnormal{e}}^{-\vert z \vert ^2} = \left \{
 \begin{array}{lr}
 0 & \mbox{ if } j < i-n \\ & \\
 \sum_{n=0}^i (-1)^{n+j} C_i^n  \frac{j!}{(j-i+n)!}  (z)^{j-i+n} (\bar{z})^n \mathrm{\textnormal{e}}^{-\vert z \vert ^2} & \mbox{ if } j \geq i-n
 \end{array}
 \right.
 \]
 \;\\
 
  \[ 
 \mathrm{\textnormal{e}}^{\vert z \vert ^2}\partial^i \bar{\partial}^j \mathrm{\textnormal{e}}^{-\vert z \vert ^2} = \left \{
 \begin{array}{lr}
 0 & \mbox{ if } j < i-n \\ & \\
 \sum_{n=0}^i (-1)^{n+j} C_i^n  \frac{j!}{(j-i+n)!}  (z)^{j-i+n} (\bar{z})^n  & \mbox{ if } j \geq i-n
 \end{array}
 \right.
 \]
we have
$$\partial^l(\mathrm{\textnormal{e}}^{\vert z \vert ^2} \partial^i \bar{\partial}^j \mathrm{\textnormal{e}}^{-\vert z \vert ^2}) =  \sum_{n=0}^i (-1)^{n+j} C_i^n  \frac{j!}{(j-i+n)!} \partial^l (z)^{j-i+n} (\bar{z})^n   \mbox{ if } j \geq i-n $$
\;\\
\[ 
\partial^l( \mathrm{\textnormal{e}}^{\vert z \vert ^2} \partial^i \bar{\partial}^j \mathrm{\textnormal{e}}^{-\vert z \vert ^2}) = \left \{
 \begin{array}{lr}
 0  & \\ \mbox{ if } j-i  +n < l \mbox{ and } j < i-n & \\& \\& \\
 \sum_{n=0}^i (-1)^{n+j} C_i^n  \frac{j!}{(j-i+n-l)!}  (z)^{i-j+n-l} (\bar{z})^{n}  & \\ & \\ \mbox{ if } j \geq i-n \mbox{ and } j-i+n \geq l
 \end{array}
 \right.
 \]
 \, \\
 \begin{eqnarray*} 
<\phi, \partial^k( R^* \phi) - R^*(  \partial^k  \phi)>  & = & <\phi,  \sum_{i=1}^k \sum_{n=0}^i \sum_{j=i-n}^k  \sum_{l=1}^{j-i+n} 
O_{i,j,l,n,k} (\partial^{k-l} \partial^{k-i} 
\bar{\partial}^{k-j} \phi)\times \\&& (-1)^{j+n}    
\frac{j!}{(j-i+n-l)!}\times(z)^{j-i+n-l} (\bar{z})^{n}>\\ 
\end{eqnarray*} 
Analogously, we have 
\begin{eqnarray*}
<\phi, \bar{\partial}^k( R^* \phi) - R^*(  \bar{\partial}^k  \phi)>  & = & <\phi,  \sum_{i=1}^k \sum_{n=0}^i \sum_{j=i-n}^k  \sum_{l=1}^{j-i+n} 
O_{i,j,l,n,k} \times (\bar{\partial}^{k-l} \partial^{k-i} 
\bar{\partial}^{k-j} \phi)\times \\& & (-1)^{j+n}    
\frac{j!}{(j-i+n)!}\frac{n!}{(n-l)!}\times (z)^{j-i+n} (\bar{z})^{n-l}>\\ 
  \end{eqnarray*}
\end{proof}
\begin{lemma} \label{F}
  Let $\varphi = \vert z \vert ^2$ then
\begin{eqnarray*}
1)\; \alpha^2 <\phi, R( R^* \phi) - R^*( R \phi)>_\varphi & = & \alpha^2 \sum_{i =0}^{k-1} \sum_{j =0}^{k-1}\frac{(k!)^4}{(i!)^2(j!)^2((k-j)!)((k-i)!)} \times  \vert \vert \bar{\partial}^{j} \partial^{i} \phi \vert \vert_\varphi^2\\
2)\; \beta^2 <\phi, \bar{\partial}^{k}( \bar{\partial}^{k*} \phi) - \bar{\partial}^{k*} ( \bar{\partial}^{k} \phi) >_\varphi  
& = & \beta^2 \sum_{j =0}^{k-1} \frac{(k!)^2}{(j!)^2((k-j)!)}\times  \vert \vert \bar{\partial}^{j}  \phi \vert \vert_\varphi^2\\
3)\; \gamma^2 <\phi, \partial^{k}( \partial^{k*} \phi) - \partial^{k*} ( \partial^{k} \phi) >_\varphi  & = & \gamma^2 \sum_{i =0}^{k-1} \frac{(k!)^2}{(i!)^2((k-i)!)} \times\vert \vert \partial^{i}  \phi \vert \vert_\varphi^2 \\
4)\;\alpha \gamma <\phi,  \partial^k ( R^* \phi) - R^*(  \partial^k  \phi)>_\varphi & = &  \alpha  \gamma \sum_{l =1}^{j-i+n} \frac{(k!)^2}{((l)!)[(k+l)!]^2} \times
 < \bar{\partial}^{k} (\partial^{k-l} \phi) ,   \partial^{k-l} \phi>_\varphi \\
5)\;\alpha \beta <\phi,  \bar{\partial}^k ( R^* \phi) - R^*(  \bar{\partial}^k  \phi)>_\varphi & = &  \alpha  \beta \sum_{l =1}^{k} \frac{(k!)^2}{((l)!)[(k+l)!]^2} \times
 < \partial^{k} (\bar{\partial}^{k-l} \phi) ,   \bar{\partial}^{k-l} \phi>_\varphi  .
\end{eqnarray*}
 \end{lemma}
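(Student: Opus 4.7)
The plan is to reduce each of the six identities to two ingredients. First, for the Gaussian weight $\varphi = \vert z \vert^2$, a direct Leibniz computation gives the explicit forms $\bar{\partial}^{k*}_\varphi = (\bar z - \partial)^k$ and $\partial^{k*}_\varphi = (z - \bar{\partial})^k$, and from these the elementary commutators are central scalars: $[\bar{\partial}, \bar{\partial}^*_\varphi] = 1$, $[\partial, \partial^*_\varphi] = 1$, while $[\partial, \bar{\partial}^*_\varphi] = [\bar{\partial}, \partial^*_\varphi] = 0$ and $[\partial, \bar{\partial}] = [\bar{\partial}^*_\varphi, \partial^*_\varphi] = 0$. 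Second, whenever $[a,b] = c$ with $c$ central, a short induction on $k$ yields the Heisenberg-type identity
\[ [a^k, b^k] \;=\; \sum_{l=1}^{k} \binom{k}{l}^2 l!\, c^l\, b^{k-l} a^{k-l}. \]

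Items 2) and 3) follow by applying this formula to $(a,b) = (\bar{\partial}, \bar{\partial}^*_\varphi)$ and to $(a,b) = (\partial, \partial^*_\varphi)$, both with $c = 1$. Pairing the resulting expression with $\phi$ in the weighted inner product and using the adjoint identity $<\phi, \bar{\partial}^{(k-l)*}_\varphi(\bar{\partial}^{k-l}\phi)>_\varphi = \vert\vert \bar{\partial}^{k-l}\phi \vert\vert_\varphi^2$ converts each Heisenberg summand into the squared norm of a lower-order derivative. Re-indexing $j = k-l$ and simplifying $\binom{k}{j}^2 (k-j)! = \frac{(k!)^2}{(j!)^2(k-j)!}$ produces the coefficients in 2) and 3). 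Item 1) is obtained by applying the same formula twice: writing $R^* = A^k B^k$ with $A = \bar{\partial}^*_\varphi$ and $B = \partial^*_\varphi$, and using that $\partial$ commutes with $A$ and $\bar{\partial}$ commutes with $B$, I move $\bar{\partial}^k$ across $A^k$ and then $\partial^k$ across $B^k$, each step producing a Heisenberg correction. After pairing with $\phi$ and applying the adjoint relations twice, every summand becomes a squared norm $\vert\vert \bar{\partial}^{k-l}\partial^{k-m}\phi \vert\vert_\varphi^2$ with product coefficient $\binom{k}{l}^2(k-l)!\cdot \binom{k}{m}^2 (k-m)!$, which matches the stated formula after re-indexing $(i,j) = (k-m, k-l)$.

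For items 4) and 5) I invoke Lemma \ref{E} 5) and 6) directly and convert the polynomial prefactors $z^{j-i+n-l}\bar z^n$ back into derivatives by iterated integration by parts. Since $\bar{\partial}^*_\varphi = \bar z - \partial$, multiplication by $\bar z$ in the weighted inner product may be traded for $\bar{\partial}^*_\varphi + \partial$ (and similarly for $z$ via $\partial^*_\varphi = z - \bar{\partial}$). Each such substitution absorbs one of the inner indices as an extra derivative on $\phi$, so the quadruple sum from Lemma \ref{E} collapses to a single sum over $l$ and the surviving weighted integrals regroup as $<\bar{\partial}^k(\partial^{k-l}\phi), \partial^{k-l}\phi>_\varphi$ respectively $<\partial^k(\bar{\partial}^{k-l}\phi), \bar{\partial}^{k-l}\phi>_\varphi$.

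The main obstacle is the combinatorial bookkeeping. Each Heisenberg step and each Leibniz expansion from Lemma \ref{C} produces nested binomial and factorial coefficients, and one has to check carefully that the apparently larger multiple sums really do collapse to the compact expressions of the statement. Handling items 2) and 3) first, deducing 1) as the doubled version, and finally treating 4) and 5) as corollaries of Lemma \ref{E}, seems to be the cleanest way to keep the signs and prefactors under control.
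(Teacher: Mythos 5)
Your treatment of items 1)--3) is a genuinely different and more self-contained route than the paper's: the paper simply quotes item 2) from Lemma 2.4 of \cite{1}, quotes item 1) from a lemma of the authors' earlier work, and obtains 3) by conjugation, whereas you rederive everything from the explicit Gaussian adjoints $\bar{\partial}^{k*}_\varphi=(\bar z-\partial)^k$, $\partial^{k*}_\varphi=(z-\bar{\partial})^k$ together with the Weyl-algebra identity $[a^k,b^k]=\sum_{l=1}^k\binom{k}{l}^2 l!\,c^l\,b^{k-l}a^{k-l}$; both ingredients are correct, and they do yield 2) and 3) with the stated coefficients. For 4)--5) you propose essentially the paper's own route (start from Lemma \ref{E}, regroup, integrate by parts); in fact your algebraic machinery gives these items more directly, since $[\bar{\partial}^k,R^*]=[\bar{\partial}^k,(\bar{\partial}^{*}_\varphi)^k](\partial^{*}_\varphi)^k$, which after pairing with $\phi$ gives $\sum_{l=1}^k\frac{(k!)^2}{l!\,((k-l)!)^2}\langle\partial^k(\bar{\partial}^{k-l}\phi),\bar{\partial}^{k-l}\phi\rangle_\varphi$ --- exactly the coefficient the paper derives inside its proof, which confirms that the $[(k+l)!]^2$ in the lemma's display (and the upper limit $j-i+n$ in 4)) are typos for $[(k-l)!]^2$ and $k$.

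The genuine gap is in item 1). With $A=\bar{\partial}^{*}_\varphi$, $B=\partial^{*}_\varphi$, the correct expansion is
\[
RR^*=\partial^k\bar{\partial}^k A^kB^k=\sum_{l,m=0}^{k}\binom{k}{l}^2\binom{k}{m}^2 l!\,m!\;A^{k-l}B^{k-m}\bar{\partial}^{k-l}\partial^{k-m},
\]
and $R^*R$ is only the $(l,m)=(0,0)$ term, so the commutator also contains the mixed terms in which exactly one of $l,m$ vanishes; after pairing these contribute $\Vert\bar{\partial}^{k}\partial^{i}\phi\Vert_\varphi^2$ ($i<k$) and $\Vert\bar{\partial}^{j}\partial^{k}\phi\Vert_\varphi^2$ ($j<k$), which are absent from the stated sum over $0\le i,j\le k-1$. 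So your claim that the double Heisenberg step ``matches the stated formula after re-indexing'' cannot stand as written: either you multiplied the two correction sums (which is not the commutator of the products), or, carried out correctly, the computation produces these extra boundary terms --- already for $k=1$ one gets $\langle\phi,R(R^*\phi)-R^*(R\phi)\rangle_\varphi=\Vert\phi\Vert_\varphi^2+\Vert\partial\phi\Vert_\varphi^2+\Vert\bar{\partial}\phi\Vert_\varphi^2$, not $\Vert\phi\Vert_\varphi^2$. The discrepancy is harmless for the main theorem, since the omitted terms are nonnegative and the lower bound $\Vert H^*_\varphi\phi\Vert^2_\varphi\ge(\alpha^2(k!)^2+\beta^2 k!+\gamma^2 k!)\Vert\phi\Vert^2_\varphi$ survives, but your write-up must either include those terms or state 1) as an inequality. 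Finally, your intermediate coefficient $\binom{k}{l}^2(k-l)!\,\binom{k}{m}^2(k-m)!$ should read $\binom{k}{l}^2 l!\,\binom{k}{m}^2 m!$ before the re-indexing $(i,j)=(k-m,k-l)$.
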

 \begin{proof}
\vskip 0.5mm
According to Lemma $2.5$ of \cite{3}, we have :
 \begin{eqnarray*}
\alpha^2 <\phi, R( R^* \phi) - R^*(  R \phi)>_\varphi & = & \alpha^2 \sum_{i =0}^{k-1} \sum_{j =0}^{k-1}\frac{(k!)^4}{(i)^2(j)^2((k-j)!)((k-i)!)} \times  \vert \vert \bar{\partial}^{j} \partial^{i} \phi \vert \vert_\varphi^2 .
\end{eqnarray*}
\vskip 0.5mm
2) This item is obtained from the Lemma $2.4$ of \cite{1}. Thus :
\begin{eqnarray*}
\beta^2 <\phi, \bar{\partial}^{k}( \bar{\partial}^{k*} \phi) - \bar{\partial}^{k*} ( \bar{\partial}^{k} \phi) >_\varphi  
& = & \beta^2 \sum_{j =0}^{k-1} \frac{(k!)^2}{(j!)^2((k-j)!)} \times \vert \vert \bar{\partial}^{j}  \phi \vert \vert_\varphi^2 .
\end{eqnarray*}
\vskip 0.5mm
3)By conjugation, we have
\begin{eqnarray*}
 \gamma^2 <\phi, \partial^{k}( \partial^{k*} \phi) - \partial^{k*} ( \partial^{k} \phi) >_\varphi  & = & \gamma^2 \sum_{i =0}^{k-1} \frac{(k!)^2}{(i!)^2((k-i)!)} \times  \vert \vert \partial^{i}  \phi \vert \vert_\varphi^2 .
\end{eqnarray*}
\vskip 0.5mm
5) According to Lemma \ref{E},
 \begin{eqnarray*}
<\phi, \bar{\partial}^k( R^* \phi) - R^*(  \bar{\partial}^k  \phi)>  & = & <\phi,  \sum_{i=1}^k \sum_{n=0}^i \sum_{j=i-n}^k  \sum_{l=1}^{j-i+n} 
O_{i,j,l,n,k} \times(\bar{\partial}^{k-l} \partial^{k-i} 
\bar{\partial}^{k-j} \phi)\times\\ &&(-1)^{j+n}    
\frac{j!}{(j-i+n)!}\frac{n!}{(n-l)!}\times (z)^{j-i+n} (\bar{z})^{n-l}>\\ .
  \end{eqnarray*}
  
  Let $s = n-l$
  \begin{eqnarray*}
<\phi, \bar{\partial}^k( R^* \phi) - R^*(  \bar{\partial}^k  \phi)>  & = & <\phi,\sum_{i=1}^k  \sum_{j=i-n}^k  \sum_{l=1}^{k} 
A  (\bar{\partial}^{k-l} \partial^{k-i} 
\bar{\partial}^{k-j} \phi) \times \sum_{s=0}^{i-l} (-1)^{j+l+s}    C_{i-l}^s \times\\&& \frac{j!}{(j-i+l+s)!}\times (z)^{j-i+l+s} (\bar{z})^{s}>
 \end{eqnarray*}
 with
 $$A=A(k,j,l,i) := \frac{(k!)^3}{l!j!(k-i)!(k-j)! (k-l)!(i-l)!}$$
 we have
  \begin{eqnarray*}
<\phi, \bar{\partial}^k( R^* \phi) - R^*(  \bar{\partial}^k  \phi)>  & = & <\phi,\sum_{i=1}^k  \sum_{j=i-n}^k  \sum_{l=1}^{k} (-1)^{l}
A  (\bar{\partial}^{k-l} \partial^{k-i} 
\bar{\partial}^{k-j} \phi) \mathrm{\textnormal{e}}^{\vert z \vert ^2} \partial^{i-l}\bar{\partial}^{j} \mathrm{\textnormal{e}}^{-\vert z \vert ^2}>
 \end{eqnarray*}
 \begin{align*}
[\bar{\partial}^k( R^* \phi) - R^*(  \bar{\partial}^k  \phi)] \mathrm{\textnormal{e}}^{-\vert z \vert ^2} = & \sum_{i=1}^k  \sum_{j=i-n}^k  \sum_{l=1}^{k} (-1)^{l}
A  (\bar{\partial}^{k-l} \partial^{k-i} 
\bar{\partial}^{k-j} \phi) \partial^{i-l}\bar{\partial}^{j} \mathrm{\textnormal{e}}^{-\vert z \vert ^2}&
\end{align*}
 Let $s = i-l$
 \begin{align*}
[\bar{\partial}^k( R^* \phi) - R^*(  \bar{\partial}^k  \phi)] \mathrm{\textnormal{e}}^{-\vert z \vert ^2} = & \sum_{l=1}^k  \sum_{j=i-n}^k   (-1)^{l}
A'  \sum_{s=0}^{k-l} C_{k-l}^s(\bar{\partial}^{k-l} \partial^{k-l-s} 
\bar{\partial}^{k-j} \phi) \partial^{s}\bar{\partial}^{j} \mathrm{\textnormal{e}}^{-\vert z \vert ^2}&
\end{align*}
with
 $$A'=A'(k,j,l,m,i) := \frac{(k!)^3}{l!j!(k-j)! [(k-l)!]^2}$$
 so we have 
\begin{align*}
[\bar{\partial}^k( R^* \phi) - R^*(  \bar{\partial}^k  \phi)] \mathrm{\textnormal{e}}^{-\vert z \vert ^2} = & \sum_{l=1}^k  \sum_{j=i-n}^k   (-1)^{l}
A' \partial^{k-l} (\bar{\partial}^{k-l}  
\bar{\partial}^{k-j} \phi \bar{\partial}^{j} \mathrm{\textnormal{e}}^{-\vert z \vert ^2})&
\end{align*} 
and
\begin{align*}
[\bar{\partial}^k( R^* \phi) - R^*(  \bar{\partial}^k  \phi)] \mathrm{\textnormal{e}}^{-\vert z \vert ^2} = & \sum_{l=1}^k A'' \sum_{j=0}^k   (-1)^{l}C_{k}^j
 \partial^{k-l} (\bar{\partial}^{k-l}  
\bar{\partial}^{k-j} \phi \bar{\partial}^{j} \mathrm{\textnormal{e}}^{-\vert z \vert ^2})&
\end{align*} 
 with
 $$A''=A''(k,j,l,m,i) := \frac{(k!)^2}{l! [(k-l)!]^2}$$
 so
 \begin{align*}
[\bar{\partial}^k( R^* \phi) - R^*(  \bar{\partial}^k  \phi)] \mathrm{\textnormal{e}}^{-\vert z \vert ^2} = & \sum_{l=1}^k A''   (-1)^{l}
 \partial^{k-l} \bar{\partial}^{k}(  
\bar{\partial}^{k-l} \phi \mathrm{\textnormal{e}}^{-\vert z \vert ^2})& 
\end{align*}
Coming back to the computation with weighted inner product  , we have,
\begin{eqnarray*}
<\phi, \bar{\partial}^k( R^* \phi) - R^*(  \bar{\partial}^k  \phi)>_\varphi & = & \int_\mathbb{C} \overline{\phi}\bar{\partial}^k ( R^* \phi) \mathrm{\textnormal{e}}^{-\vert z \vert ^2} d\sigma - \\ & & \int_\mathbb{C} \overline{\phi}R^*(  \bar{\partial}^k  \phi)\mathrm{\textnormal{e}}^{-\vert z \vert ^2} d\sigma\\
  & = & \sum_{l=1}^k  A''(-1)^{l} \times\\ & & \int_\mathbb{C} \overline{\phi} \partial^{k-l} \bar{\partial}^{k}(  
\bar{\partial}^{k-l} \phi \mathrm{\textnormal{e}}^{-\vert z \vert ^2}) d\sigma\\
  & = &  \sum_{l=1}^k A''(-1)^{k} \times\\ & & \int_\mathbb{C}\partial^{k-l}  \overline{\phi} \bar{\partial}^{k}(  
\bar{\partial}^{k-l} \phi \mathrm{\textnormal{e}}^{-\vert z \vert ^2}) d\sigma\\
  & = & \sum_{l=1}^k A'' \times\\ & & \int_\mathbb{C}\bar{\partial}^{k} \partial^{k-l}  \overline{\phi} \bar{\partial}^{k-l}  \phi   \mathrm{\textnormal{e}}^{-\vert z \vert ^2} d\sigma\\ 
& = &  \sum_{l=1}^k A'' \times\\ & & \int_\mathbb{C} \overline{\partial^{k} \bar{\partial}^{k-l} \phi}   \bar{\partial}^{k-l}  \phi    \mathrm{\textnormal{e}}^{-\vert z \vert ^2} d\sigma\\
  & = &  \sum_{l=1}^k A''   <\partial^{k} \bar{\partial}^{k-l} \phi,  \bar{\partial}^{k-l}  \phi>_\varphi
  \end{eqnarray*}
   with
 $$A''=A''(k,j,l,m,i) := \frac{(k!)^2}{l! [(k-l)!]^2}$$
 so
  \begin{eqnarray*}
\alpha \beta   <\phi, \bar{\partial}^k( R^* \phi) - R^*(  \bar{\partial}^k  \phi)>_\varphi & = &  \alpha  \beta \sum_{l=1}^{k} \frac{(k!)^2}{l! [(k-l)!]^2}   \times <\partial^{k}(\bar{\partial}^{k-l} \phi),   \bar{\partial}^{k-l}  \phi >_\varphi 
  \end{eqnarray*}
  By analogy we have\\
\vskip 0.5mm
6)   \begin{eqnarray*}
\alpha \gamma   <\phi, \partial^k( R^* \phi) - R^*(  \partial^k  \phi)>_\varphi & = &  \alpha  \gamma \sum_{l=1}^{k} \frac{(k!)^2}{l! [(k-l)!]^2}   \times <\bar{\partial}^{k}(\partial^{k-l} \phi),   \partial^{k-l}  \phi >_\varphi
  \end{eqnarray*}
 \end{proof}
\section{Proof of the main Theorem}
\begin{proof}{(Theorem \ref{P})}
Let $\varphi = \vert z \vert^2$ and $\phi \in C_0^\infty(\mathbb{C})$. According to Lemma \ref{B}
\begin{eqnarray*}
\vert \vert H_\varphi^* \phi \vert \vert_\varphi^2 & \geq & \alpha^2 <\phi,R( R^* \phi) - R^*(R \phi) >_\varphi +  \beta^2 <\phi, \bar{\partial}^{k}( \bar{\partial}^{k*} \phi) - \bar{\partial}^{k*} ( \bar{\partial}^{k} \phi) >_\varphi \\  & + & \gamma^2 <\phi, \partial^{k}( \partial^{k*} \phi) - \partial^{k*} ( \partial^{k} \phi) >_\varphi + \alpha \beta <\phi,R( \bar{\partial}^{k*} \phi) +\bar{\partial}^{k}( R^* \phi)  - R^*( \bar{\partial}^{k} \phi) \\ & - & \bar{\partial}^{k*}( R \phi) >_\varphi  + \alpha \gamma <\phi,R( \partial^{k*} \phi) +\partial^{k}( R^* \phi)  - R^*( \partial^{k} \phi) - \partial^{k*}( R \phi) >_\varphi \\  & + &  \beta \gamma <\phi, \bar{\partial}^{k}( \partial^{k*} \phi) +\partial^{k}( \bar{\partial}^{k*}  \phi)  - \bar{\partial}^{k*} ( \partial^{k} \phi) - \partial^{k*}( \bar{\partial}^{k} \phi) >_\varphi 
\end{eqnarray*}
by setting 
$$
H_\varphi =\alpha \partial^k \bar{\partial}^{k} + \beta \bar{\partial}^k +\gamma  \partial^k  + c .
$$
According to the Lemma \ref{C}
\begin{eqnarray*}
 \alpha^2 <\phi,R( R^* \phi) - R^*(R \phi) >_\varphi \\  +  \beta^2 <\phi, \bar{\partial}^{k}( \bar{\partial}^{k*} \phi) - \bar{\partial}^{k*} ( \bar{\partial}^{k} \phi) >_\varphi \\   + \gamma^2 <\phi, \partial^{k}( \partial^{k*} \phi) - \partial^{k*} ( \partial^{k} \phi) >_\varphi  & = & \alpha^2 \sum_{i =0}^{k-1} \sum_{j =0}^{k-1}\frac{(k!)^4}{(i!)^2(j!)^2((k-j)!)((k-i)!)} \times\\ & &  \vert \vert \bar{\partial}^{j} \partial^{i} \phi \vert \vert_\varphi^2 +  \beta^2 \sum_{j =0}^{k-1} \frac{(k!)^2}{(j!)^2((k-j)!)} \times\\ & &  \vert \vert \bar{\partial}^{j}  \phi \vert \vert_\varphi^2 + \gamma^2 \sum_{i =0}^{k-1} \frac{(k!)^2}{(i!)^2((k-i)!)} \times\\ & &  \vert \vert \partial^{i}  \phi \vert \vert_\varphi^2 \\
 \end{eqnarray*}
 so
 $$\vert \vert H_\varphi^* \phi \vert \vert_\varphi^2 \geq (\alpha^2(k!)^2 + \beta^2(k!)+  \gamma^2(k!))\vert \vert \phi \vert \vert_\varphi^2.$$
 According to the Cauchy-Schwarz inequality, we have
 \begin{eqnarray*}
\vert<f, \phi>_\varphi \vert^2 & \leq & \; \vert \vert f \vert \vert_\varphi^2 \vert \vert \phi \vert \vert_\varphi^2\\
& = &\frac{1}{\alpha^2(k!)^2 + \beta^2(k!)+  \gamma^2(k!)} \vert \vert f \vert \vert_\varphi^2 (\alpha^2(k!)^2 + \beta^2(k!)+  \gamma^2(k!)) \vert \vert \phi \vert \vert_\varphi^2\\
& \leq & \frac{1}{\alpha^2(k!)^2 + \beta^2(k!)+  \gamma^2(k!)} \vert \vert f \vert \vert_\varphi^2 \vert \vert (\alpha \partial^k \bar{\partial}^{k} + \beta \bar{\partial}^k +\gamma  \partial^k  + c)_\varphi^* \phi \vert \vert_\varphi^2.\\ 
 \end{eqnarray*}
 According to Lemma \ref{A}, there exists $u \in  L^2(\mathbb{\C}, \mathrm{\textnormal{e}}^{- \vert z \vert^2})$ such that 
 $$ H_\varphi u = f$$ with the estimation of the norm
 $$\vert \vert u \vert \vert_\varphi^2 \leq \frac{1}{\alpha^2(k!)^2 + \beta^2(k!)+  \gamma^2(k!)} \vert \vert f \vert \vert_\varphi^2$$
 that is to say
 $$\int_\mathbb{C} \vert u \vert^2 \mathrm{\textnormal{e}}^{- \vert z \vert^2} d\sigma \leq \frac{1}{\alpha^2(k!)^2 + \beta^2(k!)+  \gamma^2(k!)} \int_\mathbb{C} \vert f \vert^2 \mathrm{\textnormal{e}}^{- \vert z \vert^2} d\sigma.$$
\end{proof}\\
As a consequence we have
\begin{theorem}
There is a linear and bounded operator $$T_k : L^2(\mathbb{\C}, \mathrm{\textnormal{e}}^{- \vert z \vert^2}) \longrightarrow L^2(\mathbb{\C}, \mathrm{\textnormal{e}}^{- \vert z \vert^2})$$
such that $$ (\alpha \partial^k \bar{\partial}^{k} + \beta \bar{\partial}^k +\gamma  \partial^k  + c) T_k = I$$ with the estimation of the norm
$$\vert \vert T_k \vert \vert^2_\varphi \leq \frac{1}{\alpha^2(k!)^2 + \beta^2(k!)+  \gamma^2(k!)}$$ where $\vert \vert T_k \vert \vert_\varphi$ is the norm of $T_k$ in $L^2(\mathbb{\C}, \mathrm{\textnormal{e}}^{- \vert z \vert^2})$.
\end{theorem}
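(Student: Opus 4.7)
The plan is to realize $T_k$ as a canonical solution operator built from Theorem \ref{P}. For each $f\in L^2(\mathbb{C},\mathrm{e}^{-|z|^2})$, Theorem \ref{P} produces at least one $u\in L^2(\mathbb{C},\mathrm{e}^{-|z|^2})$ solving $H_\varphi u=f$ with the quantitative estimate
\[
\|u\|_\varphi^{2}\;\leq\;\frac{1}{\alpha^{2}(k!)^{2}+\beta^{2}(k!)+\gamma^{2}(k!)}\,\|f\|_\varphi^{2}.
\]
The task is therefore to select such a $u$ in a way that depends linearly and boundedly on $f$, and to verify that the resulting assignment $f\mapsto u$ is a right inverse of $H_\varphi$.

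First I would revisit the construction in Lemma \ref{A}. The key inequality proved during that construction is
\[
|\langle f,\phi\rangle_\varphi|\;\leq\;\sqrt{a}\,\|H_\varphi^{*}\phi\|_\varphi,\qquad \phi\in C_0^\infty(\mathbb{C}),
\]
with $a=\bigl(\alpha^{2}(k!)^{2}+\beta^{2}(k!)+\gamma^{2}(k!)\bigr)^{-1}$. This inequality allows me to define the linear functional $L_f(H_\varphi^{*}\phi):=\langle f,\phi\rangle_\varphi$ on the subspace $E=H_\varphi^{*}(C_0^\infty(\mathbb{C}))\subset L^{2}(\mathbb{C},\mathrm{e}^{-|z|^2})$; the inequality guarantees it is well defined (even if two test functions have the same image under $H_\varphi^{*}$) and continuous, with operator norm at most $\sqrt{a}$. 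Extending $L_f$ to the closure $\overline{E}$ by continuity and then to all of $L^{2}(\mathbb{C},\mathrm{e}^{-|z|^2})$ by declaring it to be zero on $\overline{E}^{\perp}$ produces a canonical bounded linear extension $\tilde L_f$, and this canonical choice replaces the non-unique Hahn--Banach extension used in the proof of Lemma \ref{A}. By the Riesz representation theorem there exists a unique $u_f\in L^{2}(\mathbb{C},\mathrm{e}^{-|z|^2})$ with $\tilde L_f(g)=\langle u_f,g\rangle_\varphi$ for all $g$, and we set
\[
T_k f:=u_f.
\]

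Next I would check the three required properties. \emph{Linearity}: since $f\mapsto L_f$ is linear on $E$, and the continuous extension to $\overline{E}$ followed by the zero extension on $\overline{E}^{\perp}$ is a linear procedure, the correspondence $f\mapsto \tilde L_f$ is linear; Riesz representation is then an (antilinear or linear, depending on convention) isomorphism, and one checks that $f\mapsto T_kf$ is linear in $f$. \emph{Right inverse}: for every $\phi\in C_0^\infty(\mathbb{C})$ we have
\[
\langle H_\varphi T_k f,\phi\rangle_\varphi=\langle T_k f,H_\varphi^{*}\phi\rangle_\varphi=\tilde L_f(H_\varphi^{*}\phi)=L_f(H_\varphi^{*}\phi)=\langle f,\phi\rangle_\varphi,
\]
so $H_\varphi T_k f=f$ in the weak sense, which is exactly the identity $(\alpha\partial^{k}\bar\partial^{k}+\beta\bar\partial^{k}+\gamma\partial^{k}+c)T_k=I$. \emph{Boundedness}: the estimate from Theorem \ref{P} gives $\|T_k f\|_\varphi^{2}\leq a\|f\|_\varphi^{2}$ for every $f$, hence
\[
\|T_k\|_\varphi^{2}\;\leq\;\frac{1}{\alpha^{2}(k!)^{2}+\beta^{2}(k!)+\gamma^{2}(k!)}.
\]

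The only genuinely delicate point is the one I expect to be the main obstacle: producing a \emph{linear} selection of weak solutions. Theorem \ref{P} on its own supplies existence but not uniqueness, and the Hahn--Banach step inside Lemma \ref{A} is not canonical. Replacing it by the above continuous-extension-plus-orthogonal-zero-extension is the pivotal trick; once $T_k$ is pinned down in that canonical way, linearity, the right-inverse identity, and the norm bound all follow cleanly from the ingredients already in place.
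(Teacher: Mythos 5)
Your argument is correct, and it is in fact more careful than the paper's own proof. The paper simply applies Theorem \ref{P} to each $f$, writes $T_k(f)=u$ for the solution so obtained, and immediately concludes that $(\alpha\partial^k\bar\partial^k+\beta\bar\partial^k+\gamma\partial^k+c)T_k=I$ with the stated norm bound; it never addresses the point you correctly single out as the delicate one, namely that Theorem \ref{P} provides existence but not a canonical choice, so the assignment $f\mapsto u$ must be shown to be single-valued and \emph{linear} before one may speak of an operator $T_k$. Your fix --- re-running the Lemma \ref{A} construction with the Hahn--Banach step replaced by the canonical extension (continuous extension to $\overline{E}$, zero on $\overline{E}^{\perp}$), then Riesz representation, noting that the two conjugate-linearities cancel so that $f\mapsto u_f$ is linear --- is exactly the right way to close that gap, and your verifications of the right-inverse identity and of $\|T_k\|_\varphi^2\le\bigl(\alpha^2(k!)^2+\beta^2 k!+\gamma^2 k!\bigr)^{-1}$ are sound, since $\|u_f\|_\varphi=\|\tilde L_f\|\le\sqrt{a}\,\|f\|_\varphi$ with $a=1/(\alpha^2(k!)^2+\beta^2 k!+\gamma^2 k!)$. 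One small slip: as you define $a$, the intermediate inequality should read $|\langle f,\phi\rangle_\varphi|\le\sqrt{a}\,\|f\|_\varphi\,\|H_\varphi^{*}\phi\|_\varphi$ (the factor $\|f\|_\varphi$ is missing in your display; in Lemma \ref{A} the constant absorbs $\|f\|_\varphi^2$), but your final estimate is stated with the correct factor, so this is only notational. In short: same basic strategy as the paper (Theorem \ref{P} plus the duality construction), but you supply the linear-selection argument the paper omits, which is precisely what the statement of the theorem requires.
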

\begin{proof}
Let $f \in L^2(\mathbb{\C}, \mathrm{\textnormal{e}}^{- \vert z \vert^2})$. According to Theorem \ref{P}, there exists a weak solution $u \in L^2(\mathbb{\C}, \mathrm{\textnormal{e}}^{- \vert z \vert^2})$ such that
$$ \alpha \partial^k \bar{\partial}^{k}u + \beta \bar{\partial}^ku +\gamma  \partial^k u + cu = f$$ with the estimation of the norm 
 $$\vert \vert u \vert \vert^2_\varphi \leq \frac{1}{\alpha^2(k!)^2 + \beta^2(k!)+  \gamma^2(k!)} \vert \vert f \vert \vert^2_\varphi$$
so
$$ (\alpha \partial^k \bar{\partial}^{k} + \beta \bar{\partial}^k +\gamma  \partial^k  + c)T_k(f) = f$$ with
 $$\vert \vert T_k(f) \vert \vert^2_\varphi \leq \frac{1}{\alpha^2(k!)^2 + \beta^2(k!)+  \gamma^2(k!)} \vert \vert f \vert \vert^2_\varphi.$$
 Thus
 $$(\alpha \partial^k \bar{\partial}^{k} + \beta \bar{\partial}^k +\gamma  \partial^k  + c)T_k(f) = I$$ with 
 $$\vert \vert T_k \vert \vert^2_\varphi \leq \frac{1}{\alpha^2(k!)^2 + \beta^2(k!)+  \gamma^2(k!)}.$$
\end{proof}

\section{The different cases}
\subsection{Case where o\`u $\alpha= \gamma=0$:  The operator $\beta \bar{\partial}^{k}  + c$} 

Taking into account Theorem $1.1$ of \cite{1}, we have the following result : 
\begin{thm} \label{q}
For any $f \in L^2(\mathbb{\C}, \mathrm{\textnormal{e}}^{- \vert z \vert^2})$, there exists a weak solution $u \in L^2(\mathbb{\C}, \mathrm{\textnormal{e}}^{- \vert z \vert^2})$ of the equation
$$\beta \bar{\partial}^{k} u + cu = \beta f$$ with $\vert \beta \vert \geq 1$ and the estimation of the norm
$$\int_\mathbb{C} \vert u \vert^2 \mathrm{\textnormal{e}}^{- \vert z \vert^2} d\sigma \leq \frac{\vert \beta \vert^2}{(k!)} \int_\mathbb{C} \vert f \vert^2 \mathrm{\textnormal{e}}^{- \vert z \vert^2} d\sigma$$
\end{thm}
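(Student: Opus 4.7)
The plan is to deduce Theorem \ref{q} from the $\alpha=\gamma=0$ specialization of Theorem \ref{P}, applied to the datum $\beta f$ in place of $f$. The first thing to notice is that when $\alpha=\gamma=0$, all six cross-term integrals appearing in Lemma \ref{B} — the ones weighted by $\alpha\beta$, $\alpha\gamma$, and $\beta\gamma$ — vanish identically, and similarly the $\alpha^2$ and $\gamma^2$ diagonal terms disappear. Consequently the orthogonality hypothesis imposed on $\phi$ in Theorem \ref{P} becomes vacuous in this special case, and the H\"ormander-type estimate goes through with no further assumption on the test functions.

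Concretely, for $\phi\in C_0^\infty(\mathbb{C})$, Lemma \ref{B} collapses to
$$\|(\beta\bar{\partial}^k+c)_\varphi^*\phi\|_\varphi^2 \;=\; \|(\beta\bar{\partial}^k+c)\phi\|_\varphi^2 + \beta^2\langle \phi,\,\bar{\partial}^k(\bar{\partial}^{k*}\phi)-\bar{\partial}^{k*}(\bar{\partial}^k\phi)\rangle_\varphi,$$
and item 2 of Lemma \ref{F} evaluates the second summand to
$$\beta^2\sum_{j=0}^{k-1}\frac{(k!)^2}{(j!)^2(k-j)!}\,\|\bar{\partial}^j\phi\|_\varphi^2 \;\geq\; \beta^2\,k!\,\|\phi\|_\varphi^2$$
(retaining only the $j=0$ contribution). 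This yields $\|H_\varphi^*\phi\|_\varphi^2\geq \beta^2 k!\,\|\phi\|_\varphi^2$. Cauchy--Schwarz then gives $|\langle \beta f,\phi\rangle_\varphi|^2\leq \frac{|\beta|^2}{\beta^2 k!}\|f\|_\varphi^2\,\|H_\varphi^*\phi\|_\varphi^2$, and Lemma \ref{A} produces a weak solution $u\in L^2(\mathbb{C},e^{-|z|^2})$ to $\beta\bar{\partial}^k u + cu = \beta f$ satisfying
$$\int_\mathbb{C}|u|^2 e^{-|z|^2}\,d\sigma \;\leq\; \frac{1}{k!}\int_\mathbb{C}|f|^2 e^{-|z|^2}\,d\sigma.$$

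The last move is to weaken this sharp bound to the form stated in the theorem: since $|\beta|\geq 1$, one has $1\leq|\beta|^2$, and so the right-hand side is bounded above by $\frac{|\beta|^2}{k!}\int_\mathbb{C}|f|^2 e^{-|z|^2}d\sigma$, as required. I expect the only real obstacle to be confirming that the orthogonality hypothesis of Theorem \ref{P} may legitimately be dropped in this case; everything else is a mechanical specialization of Lemmas \ref{A}, \ref{B}, and \ref{F}, together with the observation that the condition $|\beta|\geq 1$ is invoked only to rewrite the sharp bound $\tfrac{1}{k!}$ in the form $\tfrac{|\beta|^2}{k!}$.
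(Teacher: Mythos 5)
Your proposal is correct, but it takes a genuinely different route from the paper. The paper proves Theorem \ref{q} in three lines by citing the external result of Dai--Pan (their Theorem 1.1 for the operator $\bar{\partial}^k + a$): since $|\beta|\geq 1$ forces $\beta\neq 0$, one applies that theorem with the rescaled constant $c'=c/\beta$ to get $\bar{\partial}^k u + c'u = f$ with the sharp bound $\frac{1}{k!}\int|f|^2e^{-|z|^2}d\sigma$, multiplies the equation by $\beta$, and then weakens the constant to $\frac{|\beta|^2}{k!}$ exactly as you do. You instead stay inside the paper's own H\"ormander machinery: you specialize Lemmas \ref{B} and \ref{F} to $\alpha=\gamma=0$, observe that every cross term carries a factor $\alpha\beta$, $\alpha\gamma$ or $\beta\gamma$ and hence vanishes, keep only the $j=0$ term of the $\beta^2$ sum to get $\|H_\varphi^*\phi\|_\varphi^2\geq \beta^2 k!\,\|\phi\|_\varphi^2$, and conclude via Cauchy--Schwarz and Lemma \ref{A} applied to the datum $\beta f$. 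Your care about the orthogonality hypothesis of Theorem \ref{P} is warranted and well handled: as literally stated that hypothesis reads $\beta\langle\partial^k(\bar{\partial}^{k-l}\phi),\bar{\partial}^{k-l}\phi\rangle_\varphi=0$ even when $\gamma=0$, so one cannot quote Theorem \ref{P} verbatim; but, as you note, in the proof those pairings only ever appear multiplied by $\alpha\beta$ and $\alpha\gamma$, so re-running the argument with $\alpha=0$ removes the hypothesis entirely. The trade-off: the paper's proof is shorter and leans on the cited sharp result, while yours is self-contained, explains why the hypothesis of Theorem \ref{P} is vacuous in this case, and makes visible that the sharp constant $\frac{1}{k!}$ holds for any real $\beta\neq 0$, with $|\beta|\geq 1$ used (in both proofs) only for the final lossy rewriting of the bound.
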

\begin{proof}
Let $f \in L^2(\mathbb{\C}, \mathrm{\textnormal{e}}^{- \vert z \vert^2})$, according to Theoreme $1.1$ of \cite{1}, there exists a weak solution $u \in L^2(\mathbb{\C}, \mathrm{\textnormal{e}}^{- \vert z \vert^2})$ of the equation
$$ \bar{\partial}^{k} u + c' u =  f$$ where $c'= \frac{c}{\beta}$ with the estimation of the norm
$$\int_\mathbb{C} \vert u \vert^2 \mathrm{\textnormal{e}}^{- \vert z \vert^2} d\sigma \leq \frac{1}{(k!)} \int_\mathbb{C} \vert f \vert^2 \mathrm{\textnormal{e}}^{- \vert z \vert^2} d\sigma$$ 
so $$\beta  \bar{\partial}^{k} u + cu = \beta f.$$
Or $\vert \beta \vert \geq 1$ so
$$\frac{1}{(k!)} \int_\mathbb{C} \vert f \vert^2 \mathrm{\textnormal{e}}^{- \vert z \vert^2} d\sigma \leq \frac{\vert \beta \vert^2}{(k!)} \int_\mathbb{C} \vert f \vert^2 \mathrm{\textnormal{e}}^{- \vert z \vert^2} d\sigma$$
Thus
$$\int_\mathbb{C} \vert u \vert^2 \mathrm{\textnormal{e}}^{- \vert z \vert^2} d\sigma \leq \frac{\vert \beta \vert^2}{(k!)} \int_\mathbb{C} \vert f \vert^2 \mathrm{\textnormal{e}}^{- \vert z \vert^2} d\sigma$$
\end{proof}
\begin{theorem}
There is a linear and bounded operator $$T_k : L^2(\mathbb{\C}, \mathrm{\textnormal{e}}^{- \vert z \vert^2}) \longrightarrow L^2(\mathbb{\C}, \mathrm{\textnormal{e}}^{- \vert z \vert^2})$$
such that $$ (\beta \bar{\partial}^{k}  + c) T_k = I$$ with $\vert \alpha \vert \geq 1$ and the estimation of the norm
$$\vert \vert T_k \vert \vert^2_\varphi \leq \frac{1}{(k!)}$$ where $\vert \vert T_k \vert \vert_\varphi$ is the norm of $T_k$ in $L^2(\mathbb{\C}, \mathrm{\textnormal{e}}^{- \vert z \vert^2})$.
\end{theorem}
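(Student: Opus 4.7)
The plan is to directly apply Theorem \ref{q} and use it to construct $T_k$ as a pointwise solution operator, then promote the resulting assignment $f \mapsto u$ to a linear bounded map by the same Hahn-Banach/Riesz argument that underlies Lemma \ref{A}. Concretely, given $f \in L^2(\mathbb{C}, e^{-|z|^2})$, I would feed $f/\beta$ (well-defined since $|\beta|\geq 1 > 0$) into Theorem \ref{q}. That result produces a weak solution $u \in L^2(\mathbb{C}, e^{-|z|^2})$ of
\[
\beta \bar{\partial}^k u + cu \;=\; \beta \cdot (f/\beta) \;=\; f,
\]
with the norm estimate
\[
\int_{\mathbb{C}} |u|^2\, e^{-|z|^2}\,d\sigma \;\leq\; \frac{|\beta|^2}{k!}\int_{\mathbb{C}} \bigl|f/\beta\bigr|^2\, e^{-|z|^2}\,d\sigma \;=\; \frac{1}{k!}\int_{\mathbb{C}} |f|^2\, e^{-|z|^2}\,d\sigma.
\]

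Next, I would define $T_k(f) := u$. With this definition the identity $(\beta \bar{\partial}^k + c) T_k = I$ is immediate from the construction, and the norm bound $\|T_k\|_\varphi^2 \leq 1/k!$ is exactly the inequality above, read as an operator norm estimate. So the only non-routine item is ensuring that $T_k$ is actually linear, since a priori Theorem \ref{q} only asserts existence of \emph{some} weak solution, which may not be unique.

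To handle this, I would revisit the proof of Lemma \ref{A}: the weak solution $u$ produced there is not arbitrary, it is the element $u_0$ obtained by Riesz representation of the extended linear functional $\tilde{L}_f$ on $L^2(\mathbb{C}, e^{-|z|^2})$. Selecting $u_0$ via the minimal-norm Hahn-Banach extension (equivalently, taking $u_0$ orthogonal to the kernel of $H$, i.e.\ $u_0$ in the closure of the range of $H_\varphi^*$) yields a canonical choice that depends linearly on $f$, since $f \mapsto L_f$ is linear and the minimal-norm extension together with Riesz representation is linear. This is the standard way such $L^2$ inverses are obtained in Hörmander-type arguments.

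The main (and really only) obstacle is this linearity question: making the \emph{choice} of weak solution canonical. Once that is settled, the rest of the proof is a one-line translation of Theorem \ref{q}. I expect the write-up to mirror almost verbatim the proof of the earlier analogous corollary (the boundedness theorem following Theorem \ref{P}), merely replacing the constant $\alpha^2(k!)^2 + \beta^2(k!) + \gamma^2(k!)$ by $k!$ and inserting the rescaling $f \mapsto f/\beta$ that absorbs the factor $|\beta|^2$ using $|\beta|\geq 1$.
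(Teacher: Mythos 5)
Your proposal is correct and follows essentially the same route as the paper: the paper also just invokes Theorem~\ref{q} and reads its estimate as the operator bound, feeding in $f$ and letting $\vert\beta\vert^2$ cancel against $\Vert\beta f\Vert_\varphi^2$, which is the same computation as your rescaling $f\mapsto f/\beta$. Your additional care about making the choice of weak solution canonical (minimal-norm extension, so that $f\mapsto u$ is linear) addresses a point the paper's own proof silently glosses over, so nothing is missing relative to the published argument.
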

\begin{proof}
Let $f \in L^2(\mathbb{\C}, \mathrm{\textnormal{e}}^{- \vert z \vert^2})$. According to Theorem \ref{q}, there exists a weak solution $u \in L^2(\mathbb{\C}, \mathrm{\textnormal{e}}^{- \vert z \vert^2})$ such that
$$\beta  \bar{\partial}^{k} u + cu = \beta f$$  with the estimation of the norm 
 $$\vert \vert u \vert \vert^2_\varphi \leq \frac{\vert \beta \vert^2}{(k!)} \vert \vert f \vert \vert^2_\varphi$$
 Or $\beta f \in L^2(\mathbb{\C}, \mathrm{\textnormal{e}}^{- \vert z \vert^2})$\\
so
$$ (\beta \bar{\partial}^{k}  + c)T_k(\beta f) = \beta f$$ with
 $$\vert \vert T_k(\beta f) \vert \vert^2_\varphi \leq \frac{\vert \beta \vert^2}{(k!)} \vert \vert f \vert \vert^2_\varphi.$$
 Thus
 $$ (\beta  \bar{\partial}^{k}  + c)T_k = I$$ with
 $$\vert \vert T_k \vert \vert^2_\varphi \leq \frac{1}{(k!)}.$$
\end{proof}
\begin{thm}\label{t1}
Let $\varphi$ be a smooth and strictly positive function on $\mathbb{C}$ with $\Delta \varphi > 0$. For all $ f \in L^2(\mathbb{\C}, \mathrm{\textnormal{e}}^{-\varphi})$ such that $$\frac{f}{\sqrt{\Delta \varphi }}  \in L^2(\mathbb{\C}, \mathrm{\textnormal{e}}^{-\varphi}),$$ there exists a weak solution $ u \in  L^2(\mathbb{\C}, \mathrm{\textnormal{e}}^{-\varphi})$ of the equation
$$\beta  \bar{\partial}^1 u + cu = \beta f$$ with $\vert \beta \vert \geq 1$ and the estimation of the norm
$$\int_\mathbb{C} \vert u \vert^2 \mathrm{\textnormal{e}}^{-\varphi} d\sigma \leq 4  \vert \beta \vert^2 \int_\mathbb{C} \frac{\vert f \vert^2}{\Delta \varphi}  \mathrm{\textnormal{e}}^{-\varphi} d\sigma.$$
\end{thm}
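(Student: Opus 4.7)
The plan is to follow the same duality approach used for Theorem \ref{P}: apply Lemma \ref{A} in the case $\alpha=\gamma=0$, $k=1$, to reduce the problem to a quantitative Hörmander-type estimate on the formal adjoint. By Lemma \ref{A} with $H = \beta\bar\partial + c$ and right-hand side $\beta f$, the existence of a weak solution $u$ with $\|u\|_\varphi^2 \leq a$ is equivalent to
\[ |\langle \beta f, \phi\rangle_\varphi|^2 \leq a \, \|(\beta\bar\partial + c)_\varphi^* \phi\|_\varphi^2, \qquad \forall \phi \in C_0^\infty(\mathbb{C}), \]
and I will establish this with the choice $a = 4|\beta|^2 \int_\mathbb{C} |f|^2/\Delta\varphi \, e^{-\varphi} d\sigma$.

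Next, I specialize Lemma \ref{B} to $\alpha=\gamma=0$ and $k=1$, which gives
\[ \|(\beta\bar\partial + c)_\varphi^* \phi\|_\varphi^2 = \|(\beta\bar\partial + c)\phi\|_\varphi^2 + \beta^2 \langle \phi, \bar\partial(\bar\partial_\varphi^*\phi) - \bar\partial_\varphi^*(\bar\partial\phi)\rangle_\varphi. \]
The commutator $[\bar\partial, \bar\partial_\varphi^*]\phi$ is computed directly in this first-order case: from $\bar\partial_\varphi^*\phi = -\partial\phi + \phi\,\partial\varphi$, the expansions of $\bar\partial\bar\partial_\varphi^*\phi$ and $\bar\partial_\varphi^*\bar\partial\phi$ cancel out except for the term $\phi\,\bar\partial\partial\varphi = \tfrac{1}{4}\phi\,\Delta\varphi$. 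Dropping the nonnegative $\|(\beta\bar\partial+c)\phi\|_\varphi^2$ term, I obtain
\[ \|(\beta\bar\partial + c)_\varphi^* \phi\|_\varphi^2 \geq \frac{\beta^2}{4}\int_\mathbb{C} |\phi|^2 \Delta\varphi \, e^{-\varphi} d\sigma. \]

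Finally, I apply weighted Cauchy–Schwarz, taking advantage of the strict positivity of $\Delta\varphi$ as a weight:
\[ |\langle f, \phi\rangle_\varphi|^2 \leq \int_\mathbb{C} \frac{|f|^2}{\Delta\varphi} e^{-\varphi} d\sigma \cdot \int_\mathbb{C} |\phi|^2 \Delta\varphi \, e^{-\varphi} d\sigma \leq \frac{4}{\beta^2}\int_\mathbb{C} \frac{|f|^2}{\Delta\varphi} e^{-\varphi} d\sigma \cdot \|(\beta\bar\partial + c)_\varphi^*\phi\|_\varphi^2. \]
Multiplying by $\beta^2$ yields the dual inequality with constant $4\int |f|^2/\Delta\varphi\, e^{-\varphi}\,d\sigma$, which the hypothesis $|\beta|\geq 1$ inflates to $a=4|\beta|^2\int |f|^2/\Delta\varphi\, e^{-\varphi}\,d\sigma$. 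Lemma \ref{A} then delivers the desired $u\in L^2(\mathbb{C}, e^{-\varphi})$ satisfying $\beta\bar\partial u + cu = \beta f$ together with the stated norm bound.

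The main obstacle is the commutator step: although Lemmas \ref{E}–\ref{F} specialize to $\varphi = |z|^2$, I must rely only on the general, purely algebraic identity of Lemma \ref{B} valid for arbitrary smooth positive $\varphi$, and recognize that the unique surviving curvature term is exactly $\phi\,\partial\bar\partial\varphi$, hence proportional to $\Delta\varphi$. Once this positivity is identified, the weighted Cauchy–Schwarz argument is routine and the weight $1/\Delta\varphi$ on $|f|^2$ emerges automatically from the coercivity constant.
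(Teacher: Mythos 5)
Your proposal is correct, but it takes a genuinely different route from the paper. The paper's own proof is a two-line reduction: it invokes Theorem 1.2 of Dai--Pan \cite{1} for the equation $\bar{\partial}^1 u + c' u = f$ with $c' = c/\beta$, which already carries the estimate $\int_{\mathbb C}|u|^2 e^{-\varphi}d\sigma \le 4\int_{\mathbb C}\frac{|f|^2}{\Delta\varphi}e^{-\varphi}d\sigma$, then multiplies the equation by $\beta$ and uses $|\beta|\ge 1$ only to inflate the constant from $4$ to $4|\beta|^2$. You instead reprove the underlying $L^2$-estimate from scratch: Lemma \ref{A} with $H=\beta\bar{\partial}+c$ and right-hand side $\beta f$, the algebraic identity of Lemma \ref{B} specialized to $\alpha=\gamma=0$, $k=1$, the first-order commutator computation $[\bar{\partial},\bar{\partial}_\varphi^{*}]\phi=\phi\,\partial\bar{\partial}\varphi=\tfrac14\phi\,\Delta\varphi$ (valid for any smooth weight, which is exactly the point, since Lemmas \ref{E}--\ref{F} are tied to $\varphi=|z|^2$), and a weighted Cauchy--Schwarz with weight $\Delta\varphi$; the $\beta^2$ from the coercivity cancels against the $\beta^2$ from the source term, and $|\beta|\ge1$ again only pads the constant. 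Your argument checks out (including the factor $4$ coming from $\Delta=4\partial\bar{\partial}$ and the finiteness of the constant guaranteed by the hypothesis $f/\sqrt{\Delta\varphi}\in L^2(\mathbb C,e^{-\varphi})$), and what it buys is self-containedness: you effectively re-derive the cited Dai--Pan result for $k=1$, making transparent where the weight $1/\Delta\varphi$ and the constant $4$ come from, whereas the paper's proof is shorter because it leans on the literature. Both proofs show the hypothesis $|\beta|\ge1$ is not essential: the estimate holds with constant $4$ in place of $4|\beta|^2$ for any real $\beta\neq 0$.
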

\begin{proof}
Let $ f \in L^2(\mathbb{\C}, \mathrm{\textnormal{e}}^{-\varphi})$ such that  $$\frac{f}{\sqrt{\Delta \varphi }}  \in L^2(\mathbb{\C}, \mathrm{\textnormal{e}}^{-\varphi}),$$
according to Theorem $1.2$ of \cite{1}, there exists a weak solution $u \in L^2(\mathbb{\C}, \mathrm{\textnormal{e}}^{-\varphi})$ of the equation
$$  \bar{\partial}^1 u + c' u =  f$$ o\`u $c'= \frac{c}{\beta}$ with the estimation of the norm 
$$\int_\mathbb{C} \vert u \vert^2 \mathrm{\textnormal{e}}^{-\varphi} d\sigma \leq 4  \int_\mathbb{C} \frac{\vert f \vert^2}{\Delta \varphi}  \mathrm{\textnormal{e}}^{-\varphi} d\sigma.$$
so $$\beta  \bar{\partial}^1 u + cu = \beta f.$$
Or $\vert \beta \vert \geq 1$ so
$$4  \int_\mathbb{C} \frac{\vert f \vert^2}{\Delta \varphi}  \mathrm{\textnormal{e}}^{-\varphi} d\sigma \leq  4  \vert \beta \vert^2 \int_\mathbb{C} \frac{\vert f \vert^2}{\Delta \varphi}  \mathrm{\textnormal{e}}^{-\varphi} d\sigma.$$
Thus
$$\int_\mathbb{C} \vert u \vert^2 \mathrm{\textnormal{e}}^{-\varphi} d\sigma \leq 4 \vert \beta \vert^2 \int_\mathbb{C} \frac{\vert f \vert^2}{\Delta \varphi }  \mathrm{\textnormal{e}}^{-\varphi} d\sigma.$$
\end{proof}

\subsection{Some consequences of the case of the operator $\beta  \bar{\partial}^{k} + c$}
Let $\lambda > 0$ and $z_0 \in \mathbb{C}$. Let $\varphi = \lambda \vert z - z_0 \vert^2$, we get the following corollary of Theorem \ref{q} :
\begin{coro} \label{lam2}
Let $f \in L^2(\mathbb{\C}, \mathrm{\textnormal{e}}^{-\lambda \vert z - z_0 \vert^2})$, there exists a weak solution $u \in L^2(\mathbb{\C}, \mathrm{\textnormal{e}}^{-\lambda \vert z - z_0 \vert^2})$ of the equation
$$\beta \bar{\partial}^{k} u + cu = \beta f$$ with $\vert \beta \vert \geq 1$ and the estimation of the norm
$$\int_\mathbb{C} \vert u \vert^2 \mathrm{\textnormal{e}}^{- \lambda \vert z - z_0 \vert^2} d\sigma \leq \frac{\vert \beta \vert^2}{\lambda^k k!} \int_\mathbb{C} \vert f \vert^2 \mathrm{\textnormal{e}}^{- \lambda \vert z - z_0 \vert^2} d\sigma$$
\end{coro}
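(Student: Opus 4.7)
The natural plan is to reduce Corollary \ref{lam2} to Theorem \ref{q} by the affine change of variables $w = \sqrt{\lambda}(z - z_0)$, which sends the weight $\lambda|z-z_0|^2$ to the standard weight $|w|^2$. First I would record the transformation rules. For any function $g$, setting $\tilde g(w) = g(z_0 + w/\sqrt{\lambda})$, the substitution $d\sigma(z) = \lambda^{-1}d\sigma(w)$ gives
$$\int_{\mathbb{C}} |g(z)|^2 \mathrm{e}^{-\lambda|z-z_0|^2}\,d\sigma(z) = \frac{1}{\lambda}\int_{\mathbb{C}} |\tilde g(w)|^2 \mathrm{e}^{-|w|^2}\,d\sigma(w),$$
while the chain rule applied $k$ times yields $(\bar\partial_w^k \tilde u)(w) = \lambda^{-k/2}(\bar\partial_z^k u)(z)$, or equivalently $\bar\partial_z^k u(z) = \lambda^{k/2}(\bar\partial_w^k \tilde u)(w)$.

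Given $f\in L^2(\mathbb{C},\mathrm{e}^{-\lambda|z-z_0|^2})$, I would introduce $F(w) = \lambda^{-k/2}\tilde f(w)$, which lies in $L^2(\mathbb{C},\mathrm{e}^{-|w|^2})$. Applying Theorem \ref{q} with the same $\beta$ (still satisfying $|\beta|\geq 1$), with modified constant $c' = c/\lambda^{k/2}$, and with right-hand side $F$, I obtain a weak solution $v\in L^2(\mathbb{C},\mathrm{e}^{-|w|^2})$ of
$$\beta \bar\partial_w^k v + c' v = \beta F$$
satisfying
$$\int_{\mathbb{C}} |v|^2\mathrm{e}^{-|w|^2}\,d\sigma(w) \leq \frac{|\beta|^2}{k!}\int_{\mathbb{C}} |F|^2\mathrm{e}^{-|w|^2}\,d\sigma(w).$$

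I would then define $u(z) = v(\sqrt{\lambda}(z-z_0))$. Using the scaling of the $k$-th weak $\bar\partial$-derivative (a direct integration-by-parts against test functions $\phi(z) = \tilde\phi(\sqrt{\lambda}(z-z_0))$, which tracks a factor $\lambda^{k/2-1}$ on the left-hand side and $\lambda^{-1}$ on the right), the equation satisfied by $v$ pulls back precisely to $\beta \bar\partial_z^k u + cu = \beta f$ in the weak sense. Combining the norm identity with $|F|^2 = \lambda^{-k}|\tilde f|^2$ yields
$$\int_{\mathbb{C}} |u|^2\mathrm{e}^{-\lambda|z-z_0|^2}\,d\sigma = \frac{1}{\lambda}\int_{\mathbb{C}} |v|^2\mathrm{e}^{-|w|^2}\,d\sigma \leq \frac{|\beta|^2}{\lambda^{k} k!}\int_{\mathbb{C}} |f|^2\mathrm{e}^{-\lambda|z-z_0|^2}\,d\sigma.$$

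The only delicate point is book-keeping: one has to verify that the three factors arising from the change of variables, namely the Jacobian $\lambda^{-1}$ on the area measure, the weight identification $\mathrm{e}^{-\lambda|z-z_0|^2}=\mathrm{e}^{-|w|^2}$, and the derivative scaling $\lambda^{k/2}$, combine in the right way so that (i) the hypothesis $|\beta|\geq 1$ is \emph{not} upset (which it is not, since the same $\beta$ is used throughout), and (ii) the extra factor $\lambda^{-k}$ in the final estimate appears precisely from moving $\lambda^{-k/2}$ into $F$ before invoking Theorem \ref{q}. Everything else is routine.
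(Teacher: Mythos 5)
Your proof is correct and follows essentially the same route as the paper: the change of variables $w=\sqrt{\lambda}(z-z_0)$ reducing to Theorem \ref{q} with the modified constant $c/\lambda^{k/2}$, together with the Jacobian and derivative-scaling bookkeeping that produces the factor $\lambda^{-k}$. The only cosmetic difference is that you absorb the factor $\lambda^{-k/2}$ into the right-hand side $F=\lambda^{-k/2}\tilde f$ and keep $u(z)=v(\sqrt{\lambda}(z-z_0))$, whereas the paper applies the theorem to $\tilde f$ itself and rescales the solution $u=\lambda^{-k/2}v(\sqrt{\lambda}(z-z_0))$; the two are equivalent.
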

\begin{proof}
Let $f \in L^2(\mathbb{\C}, \mathrm{\textnormal{e}}^{-\lambda \vert z - z_0 \vert^2})$, we have $$\int_\mathbb{C} \vert f \vert^2 \mathrm{\textnormal{e}}^{- \lambda \vert z - z_0 \vert^2} d\sigma < + \infty.$$
Let $z = \frac{w}{\lambda} + z_0$ and $g(w)= f(z)= f(\frac{w}{\lambda} + z_0)$ then
$$\frac{1}{\lambda} \int_\mathbb{C} \vert g \vert^2 \mathrm{\textnormal{e}}^{- \vert w \vert^2} d\sigma(w) < + \infty.$$
so $g \in L^2(\mathbb{\C}, \mathrm{\textnormal{e}}^{-\vert w \vert^2})$. According to Theorem \ref{q}, there exists a weak solution $v \in L^2(\mathbb{\C}, \mathrm{\textnormal{e}}^{-\vert w \vert^2})$ of the equation
$$\beta \bar{\partial}^{k} v +  \frac{c}{(\sqrt{\lambda})^k}v = \beta g$$ with the estimation of the norm 
$$\int_\mathbb{C} \vert v \vert^2 \mathrm{\textnormal{e}}^{- \vert w \vert^2} d\sigma(w) \leq \frac{\vert \beta \vert^2}{k!} \int_\mathbb{C} \vert g \vert^2 \mathrm{\textnormal{e}}^{- \vert w \vert^2} d\sigma(w).$$
Let $u(z) = \frac{1}{(\sqrt{\lambda})^k} v(w) = \frac{1}{(\sqrt{\lambda})^k} v(\sqrt{\lambda} (z - z_0))$.\\
Then 
$$\beta \bar{\partial}^{k} u + cu = \beta f$$ with the estimation of the norm
$$\int_\mathbb{C} \vert u \vert^2 \mathrm{\textnormal{e}}^{- \lambda \vert z - z_0 \vert^2} d\sigma \leq \frac{\vert \beta \vert^2}{\lambda^k k!} \int_\mathbb{C} \vert f \vert^2 \mathrm{\textnormal{e}}^{- \lambda \vert z - z_0 \vert^2} d\sigma.$$
\end{proof}\\

As a consequence of Corollary \ref{lam2} we have :
\begin{coro} \label{4}
Let $U \in \mathbb{C}$ be a bounded open set.\\
Let $f \in L^2(U)$, there exists a weak solution $u \in L^2(U)$ of the equation
$$\beta  \bar{\partial}^{k} u + cu = \beta f$$ with $\vert \beta \vert \geq 1$ and
$$\vert \vert u \vert \vert_{L^2(U)} \leq \Big(\frac{{\textnormal{e}}^{\vert U \vert^2}\vert \beta \vert^2}{k!}\Big) \vert \vert f \vert \vert_{L^2(U)}$$ 
where $\vert U \vert $ is the diameter of $U$.
\end{coro}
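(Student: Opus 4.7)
The plan is to reduce Corollary \ref{4} to Corollary \ref{lam2} by extending $f$ by zero and exploiting the pinching of the Gaussian weight $\mathrm{e}^{-|z-z_0|^2}$ on a bounded set. First I would fix any basepoint $z_0 \in U$, let $\tilde f$ denote the extension of $f$ by zero to all of $\mathbb{C}$, and observe that since $U$ is bounded and $\mathrm{e}^{-|z-z_0|^2} \leq 1$,
$$\int_{\mathbb{C}} |\tilde f|^2 \mathrm{e}^{-|z-z_0|^2}\,d\sigma \;=\; \int_U |f|^2 \mathrm{e}^{-|z-z_0|^2}\,d\sigma \;\leq\; \|f\|_{L^2(U)}^2,$$
so $\tilde f \in L^2(\mathbb{C}, \mathrm{e}^{-|z-z_0|^2})$.

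Next I would apply Corollary \ref{lam2} with $\lambda = 1$ to $\tilde f$, producing a weak solution $u \in L^2(\mathbb{C}, \mathrm{e}^{-|z-z_0|^2})$ of $\beta\bar{\partial}^k u + cu = \beta\tilde f$ on $\mathbb{C}$ satisfying
$$\int_{\mathbb{C}} |u|^2 \mathrm{e}^{-|z-z_0|^2}\,d\sigma \;\leq\; \frac{|\beta|^2}{k!}\int_{\mathbb{C}} |\tilde f|^2 \mathrm{e}^{-|z-z_0|^2}\,d\sigma \;\leq\; \frac{|\beta|^2}{k!}\|f\|_{L^2(U)}^2.$$
The core step is then the comparison between weighted and unweighted norms on $U$: since $z_0 \in U$, the diameter bound $|z - z_0| \leq |U|$ holds for every $z \in U$, giving $\mathrm{e}^{-|z-z_0|^2} \geq \mathrm{e}^{-|U|^2}$ on $U$, hence
$$\|u\|_{L^2(U)}^2 \;\leq\; \mathrm{e}^{|U|^2}\int_U |u|^2 \mathrm{e}^{-|z-z_0|^2}\,d\sigma \;\leq\; \mathrm{e}^{|U|^2}\int_{\mathbb{C}} |u|^2 \mathrm{e}^{-|z-z_0|^2}\,d\sigma.$$
Chaining the two displays yields the stated bound $\|u\|_{L^2(U)}^2 \leq (\mathrm{e}^{|U|^2}|\beta|^2/k!)\,\|f\|_{L^2(U)}^2$.

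The only remaining point is that the restriction $u|_U$ is still a weak solution of $\beta\bar{\partial}^k u + cu = \beta f$ on $U$, which is immediate because any test function $\phi \in C_0^\infty(U)$ is also an element of $C_0^\infty(\mathbb{C})$ and $\tilde f = f$ on $U$. I do not anticipate any substantive obstacle here — the proof is a routine zero-extension argument. The only delicate point is bookkeeping the constants: the multiplicative factor $\mathrm{e}^{|U|^2}$ is absorbed entirely into the $u$-side of the estimate, while the $f$-side uses only the trivial bound $\mathrm{e}^{-|z-z_0|^2} \leq 1$; choosing the weight parameter $\lambda = 1$ (rather than optimizing in $\lambda$) is what reproduces the form of the bound in the statement.
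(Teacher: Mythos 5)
Your proposal is correct and follows essentially the same route as the paper: extend $f$ by zero, apply Corollary \ref{lam2} with $\lambda=1$ at a basepoint $z_0\in U$, and compare the weighted and unweighted norms via $\mathrm{e}^{-|z-z_0|^2}\leq 1$ on the $f$-side and $\mathrm{e}^{-|z-z_0|^2}\geq \mathrm{e}^{-|U|^2}$ on $U$ for the $u$-side. Your explicit remark that the restriction to $U$ remains a weak solution (because $C_0^\infty(U)\subset C_0^\infty(\mathbb{C})$ and $\tilde f=f$ on $U$) is a point the paper leaves implicit, but it is the same argument.
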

\begin{proof}
Let $z_0 \in U$ and $f \in L^2(U)$, we have
 \[ 
 \tilde{f}(z) = \left \{
 \begin{array}{rl}
 f(z) & \mbox{ if } z \in U \\
 0 & \mbox{ if } z \in \mathbb{C} \setminus U
 \end{array}
 \right.
 \]
 So $\tilde{f} \in L^2(\mathbb{C}) \subset L^2(\mathbb{\C}, \mathrm{\textnormal{e}}^{-\vert z - z_0 \vert^2})$. According to Corolllary \ref{lam2}, there exists a weak solution $\tilde{u} \in  L^2(\mathbb{\C}, \mathrm{\textnormal{e}}^{-\vert z - z_0 \vert^2})$ of the equation $$\beta \bar{\partial}^{k} \tilde{u} + c\tilde{u} = \beta \tilde{f}$$ with
 $$\int_\mathbb{C} \vert \tilde{u} \vert^2 \mathrm{\textnormal{e}}^{-  \vert z - z_0 \vert^2} d\sigma \leq \frac{\vert \beta \vert^2}{k!} \int_\mathbb{C} \vert \tilde{f} \vert^2 \mathrm{\textnormal{e}}^{-  \vert z - z_0 \vert^2} d\sigma$$
 so
 $$\int_\mathbb{C} \vert \tilde{u} \vert^2 \mathrm{\textnormal{e}}^{-  \vert z - z_0 \vert^2} d\sigma \leq \frac{\vert \beta \vert^2}{k!} \int_U \vert f \vert^2  d\sigma$$
 or $$\int_\mathbb{C} \vert \tilde{u} \vert^2 \mathrm{\textnormal{e}}^{-  \vert z - z_0 \vert^2} d\sigma \geq \mathrm{\textnormal{e}}^{-\vert U \vert^2} \int_U \vert \tilde{u} \vert^2  d\sigma$$
 thus
 $$\int_U \vert \tilde{u} \vert^2  d\sigma \leq \Big(\frac{{\textnormal{e}}^{\vert U \vert^2}\vert \beta \vert^2}{k!}\Big) \int_U \vert f \vert^2  d\sigma$$
 or $\tilde{u}_{\mid U} = u$ then we have
 $$\beta  \bar{\partial}^{k} u + cu = \beta f$$ with
$$\vert \vert u \vert \vert_{L^2(U)} \leq \Big(\frac{{\textnormal{e}}^{\vert U \vert^2}\vert \beta \vert^2}{k!}\Big)\vert \vert f \vert \vert_{L^2(U)}.$$ 
\end{proof}
\begin{remark}
Case where $\alpha= \beta=0$:  The operator $\gamma \partial^{k}  + c$ is the conjugate of the case where $\alpha= \gamma=0$:  The operator $\beta \bar{\partial}^{k}  + c$. 
\end{remark}

\subsection{Case where $\beta= \gamma=0$: The operator  $\alpha \partial^{k} \bar{\partial}^{k}  + c$}
Taking into account Theorem  $1.1$ of \cite{3}, we have the following result :
\begin{thm} \label{p}
For all $f \in L^2(\mathbb{\C}, \mathrm{\textnormal{e}}^{- \vert z \vert^2})$, there exists a weak solution $u \in L^2(\mathbb{\C}, \mathrm{\textnormal{e}}^{- \vert z \vert^2})$ of the equation
$$\alpha \partial^{k} \bar{\partial}^{k} u + cu = \alpha f$$ with $\vert \alpha \vert \geq 1$ and the estimation of the norm
$$\int_\mathbb{C} \vert u \vert^2 \mathrm{\textnormal{e}}^{- \vert z \vert^2} d\sigma \leq \frac{\vert \alpha \vert^2}{(k!)^2} \int_\mathbb{C} \vert f \vert^2 \mathrm{\textnormal{e}}^{- \vert z \vert^2} d\sigma$$
\end{thm}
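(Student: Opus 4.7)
The plan is to mirror the proof of Theorem \ref{q} exactly, substituting Theorem $1.1$ of \cite{3} (which handles the operator $\partial^k\bar\partial^k + c'$ in $L^2(\mathbb{C}, \mathrm{e}^{-|z|^2})$ with norm constant $1/(k!)^2$) for Theorem $1.1$ of \cite{1}. The key observation is that since $|\alpha|\geq 1$ in particular $\alpha\neq 0$, so we may divide the equation through by $\alpha$ and reduce to a known case.

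Given $f \in L^2(\mathbb{C}, \mathrm{e}^{-|z|^2})$, set $c' := c/\alpha$, which is a well-defined complex constant. Apply Theorem $1.1$ of \cite{3} to the equation
\[
\partial^k\bar\partial^k u + c' u = f,
\]
which yields a weak solution $u \in L^2(\mathbb{C}, \mathrm{e}^{-|z|^2})$ satisfying
\[
\int_{\mathbb{C}} |u|^2\, \mathrm{e}^{-|z|^2}\, d\sigma \;\leq\; \frac{1}{(k!)^2}\int_{\mathbb{C}} |f|^2\, \mathrm{e}^{-|z|^2}\, d\sigma.
\]
Multiplying both sides of the equation by $\alpha$ gives $\alpha\partial^k\bar\partial^k u + cu = \alpha f$, which is exactly the desired equation.

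For the norm estimate, the hypothesis $|\alpha|\geq 1$ is used in the trivial monotonicity step
\[
\frac{1}{(k!)^2}\int_{\mathbb{C}} |f|^2\, \mathrm{e}^{-|z|^2}\, d\sigma \;\leq\; \frac{|\alpha|^2}{(k!)^2}\int_{\mathbb{C}} |f|^2\, \mathrm{e}^{-|z|^2}\, d\sigma,
\]
which yields the claimed bound. There is no real obstacle in this argument; the only delicate point is to notice that the estimate is stated in the form $|\alpha|^2/(k!)^2$ rather than $1/(k!)^2$, and this is precisely the room obtained from the assumption $|\alpha|\geq 1$, exactly as in the $\beta$-version already treated in Theorem \ref{q}.
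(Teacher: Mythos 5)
Your proposal is correct and coincides with the paper's own proof: divide by $\alpha$ (legitimate since $|\alpha|\geq 1$ forces $\alpha\neq 0$), solve $\partial^k\bar\partial^k u + (c/\alpha)u = f$ via the known result with constant $1/(k!)^2$, multiply back by $\alpha$, and use $|\alpha|\geq 1$ to pass to the stated constant $|\alpha|^2/(k!)^2$. The only caveat is the citation: the result you (and the paper) invoke as Theorem $1.1$ of \cite{3} is in fact the authors' earlier work on $\partial^k\bar\partial^k + c$, listed as \cite{4} in the bibliography, \cite{3} being the Fa\`a di Bruno reference.
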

\begin{proof}
Let $f \in L^2(\mathbb{\C}, \mathrm{\textnormal{e}}^{- \vert z \vert^2})$, according to Theorem $1.1$ of \cite{3}, there exists a weak solution $u \in L^2(\mathbb{\C}, \mathrm{\textnormal{e}}^{- \vert z \vert^2})$ of the equation
$$ \partial^k \bar{\partial}^{k} u + c' u =  f$$ where $c'= \frac{c}{\alpha}$ with the estimation of the norm
$$\int_\mathbb{C} \vert u \vert^2 \mathrm{\textnormal{e}}^{- \vert z \vert^2} d\sigma \leq \frac{1}{(k!)^2} \int_\mathbb{C} \vert f \vert^2 \mathrm{\textnormal{e}}^{- \vert z \vert^2} d\sigma$$ 
so $$\alpha \partial^k \bar{\partial}^{k} u + cu = \alpha f.$$
Or $\vert \alpha \vert \geq 1$ so
$$\frac{1}{(k!)^2} \int_\mathbb{C} \vert f \vert^2 \mathrm{\textnormal{e}}^{- \vert z \vert^2} d\sigma \leq \frac{\vert \alpha \vert^2}{(k!)^2} \int_\mathbb{C} \vert f \vert^2 \mathrm{\textnormal{e}}^{- \vert z \vert^2} d\sigma$$
Thus
$$\int_\mathbb{C} \vert u \vert^2 \mathrm{\textnormal{e}}^{- \vert z \vert^2} d\sigma \leq \frac{\vert \alpha \vert^2}{(k!)^2} \int_\mathbb{C} \vert f \vert^2 \mathrm{\textnormal{e}}^{- \vert z \vert^2} d\sigma$$
\end{proof}
As a consequence of Theorem \ref{p} we have :
\begin{theorem}
There is a linear and bounded operator $$T_k : L^2(\mathbb{\C}, \mathrm{\textnormal{e}}^{- \vert z \vert^2}) \longrightarrow L^2(\mathbb{\C}, \mathrm{\textnormal{e}}^{- \vert z \vert^2})$$
such that $$ (\alpha \partial^k \bar{\partial}^{k}  + c) T_k = I$$ with $\vert \alpha \vert \geq 1$ and the estimation of the norm
$$\vert \vert T_k \vert \vert^2_\varphi \leq \frac{1}{(k!)^2}$$ where $\vert \vert T_k \vert \vert_\varphi$ is the norm of $T_k$ in $L^2(\mathbb{\C}, \mathrm{\textnormal{e}}^{- \vert z \vert^2})$.
\end{theorem}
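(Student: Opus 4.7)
The plan is to mimic the construction already used for the analogous bounded right inverse in the $\beta\bar{\partial}^k + c$ case: given the preceding Theorem \ref{p}, which furnishes an $L^2$ weak solution with a quantitative norm estimate, it only remains to package that existence result into a linear bounded operator by absorbing the $\alpha$-scaling.

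More precisely, I would proceed as follows. Fix $g \in L^2(\mathbb{C}, \mathrm{e}^{-|z|^2})$. Since $|\alpha|\geq 1$, in particular $\alpha \neq 0$, so $f := g/\alpha$ is also in $L^2(\mathbb{C}, \mathrm{e}^{-|z|^2})$. Applying Theorem \ref{p} to this $f$ produces a weak solution $u \in L^2(\mathbb{C}, \mathrm{e}^{-|z|^2})$ of
$$\alpha\partial^k\bar{\partial}^k u + cu = \alpha f = g,$$
satisfying
$$\int_{\mathbb{C}} |u|^2\, \mathrm{e}^{-|z|^2}\, d\sigma \leq \frac{|\alpha|^2}{(k!)^2}\int_{\mathbb{C}} |f|^2\, \mathrm{e}^{-|z|^2}\, d\sigma = \frac{|\alpha|^2}{(k!)^2}\cdot\frac{1}{|\alpha|^2}\int_{\mathbb{C}} |g|^2\, \mathrm{e}^{-|z|^2}\, d\sigma.$$
Declaring $T_k g := u$ then immediately yields $(\alpha \partial^k\bar{\partial}^k + c)T_k = I$ together with the target bound $\|T_k\|_\varphi^2 \leq 1/(k!)^2$.

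The one point that requires a small amount of care, rather than pure bookkeeping, is the linearity of $T_k$, since weak solutions of $\alpha\partial^k\bar{\partial}^k u + cu = g$ are not unique (any element of the kernel can be added). I would resolve this exactly as in the proof of the corresponding theorem for $\beta\bar{\partial}^k + c$: Theorem \ref{p} is itself obtained from Lemma \ref{A}, whose construction produces $u$ via Hahn--Banach extension and the Riesz representation theorem applied to the functional $L_f(H_\varphi^*\phi) = \langle f,\phi\rangle_\varphi$. Fixing once and for all such an extension for each $g$ (equivalently, choosing the minimal-norm representative, which is the Riesz vector in the closure of the range of $H_\varphi^*$) yields a well-defined \emph{linear} assignment $g \mapsto T_k g$, and the norm control above then states exactly that this linear operator is bounded with operator norm at most $1/k!$.

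The proof thus involves no new estimates; the obstacle is purely cosmetic, namely to track the factor $|\alpha|^{-2}$ that cancels the $|\alpha|^2$ appearing in Theorem \ref{p} and to remark that the rescaling $g = \alpha f$ is legitimate because $|\alpha|\geq 1$.
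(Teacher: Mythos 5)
Your proposal is correct and takes essentially the same route as the paper: apply Theorem \ref{p}, rescale by $\alpha$ (legitimate since $\vert \alpha \vert \geq 1$ forces $\alpha \neq 0$), and absorb the factor $\vert \alpha \vert^{2}$ into the estimate to obtain $\Vert T_k \Vert_\varphi^2 \leq 1/(k!)^2$. Your additional remark about fixing a canonical (minimal-norm) solution so that $T_k$ is well defined and linear is a point the paper passes over silently, but it is a refinement of, not a departure from, the same argument.
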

\begin{proof}
Let $f \in L^2(\mathbb{\C}, \mathrm{\textnormal{e}}^{- \vert z \vert^2})$. According to Theorem \ref{p}, there exists a weak solution $u \in L^2(\mathbb{\C}, \mathrm{\textnormal{e}}^{- \vert z \vert^2})$ such that
$$\alpha \partial^k \bar{\partial}^{k} u + cu = \alpha f$$ with the estimation of the norm
 $$\vert \vert u \vert \vert^2_\varphi \leq \frac{\vert \alpha \vert^2}{(k!)^2} \vert \vert f \vert \vert^2_\varphi$$
 Or $\alpha f \in L^2(\mathbb{\C}, \mathrm{\textnormal{e}}^{- \vert z \vert^2})$\\
so 
$$ (\alpha \partial^k \bar{\partial}^{k}  + c)T_k(\alpha f) = \alpha f$$ avec 
 $$\vert \vert T_k(\alpha f) \vert \vert^2_\varphi \leq \frac{\vert \alpha \vert^2}{(k!)^2} \vert \vert f \vert \vert^2_\varphi.$$
 Thus
 $$ (\alpha \partial^k \bar{\partial}^{k}  + c)T_k = I$$ with 
 $$\vert \vert T_k \vert \vert^2_\varphi \leq \frac{1}{(k!)^2}.$$
\end{proof}
\vskip 0.5mm
We have also the case $k = 1$ of Theorem \ref{p} stated by the following result.
\begin{thm}\label{t1}
Let $\varphi$ be a smooth and strictly positive function on $\mathbb{C}$ with $\Delta(\mathrm{\textnormal{e}}^{\varphi} \Delta \mathrm{\textnormal{e}}^{-\varphi})> 0$. For all $ f \in L^2(\mathbb{\C}, \mathrm{\textnormal{e}}^{-\varphi})$ such that $$\frac{f}{\sqrt{\Delta(\mathrm{\textnormal{e}}^{\varphi} \Delta \mathrm{\textnormal{e}}^{-\varphi })}}  \in L^2(\mathbb{\C}, \mathrm{\textnormal{e}}^{-\varphi }),$$ there exists a weak solution $ u \in  L^2(\mathbb{\C}, \mathrm{\textnormal{e}}^{-\varphi})$ of the equation
$$\alpha \partial^1 \bar{\partial}^1 u + cu = \alpha f$$ with $\vert \alpha \vert \geq 1$ and the estimation of the norm
$$\int_\mathbb{C} \vert u \vert^2 \mathrm{\textnormal{e}}^{-\varphi} d\sigma \leq 16  \vert \alpha \vert^2 \int_\mathbb{C} \frac{\vert f \vert^2}{\Delta(\mathrm{\textnormal{e}}^{\varphi} \Delta \mathrm{\textnormal{e}}^{-\varphi})}  \mathrm{\textnormal{e}}^{-\varphi} d\sigma.$$
\end{thm}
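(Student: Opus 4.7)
The plan is to mirror the reduction used to prove the analogous $\bar{\partial}^1$-result (the earlier Theorem \ref{t1} of the paper), replacing the input from \cite{1} with the corresponding input from \cite{3}. Since $|\alpha|\geq 1$, the constant $c'=c/\alpha$ is well-defined, so the equation $\alpha\partial^1\bar{\partial}^1 u+cu=\alpha f$ is equivalent to
\[
\partial^1\bar{\partial}^1 u + c'u = f.
\]
Thus the task reduces to solving this last equation in $L^2(\mathbb{C},e^{-\varphi})$ with the claimed estimate divided by $|\alpha|^2$, and then multiplying the equation by $\alpha$.

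The key step is to invoke the weight-dependent version of the $\partial\bar{\partial}+c'$ existence theorem for $k=1$ from \cite{3}: given the hypotheses $\varphi\in C^\infty(\mathbb{C})$ positive, $\Delta(e^{\varphi}\Delta e^{-\varphi})>0$, and $f/\sqrt{\Delta(e^{\varphi}\Delta e^{-\varphi})}\in L^2(\mathbb{C},e^{-\varphi})$, this yields a weak solution $u\in L^2(\mathbb{C},e^{-\varphi})$ of $\partial\bar{\partial}u+c'u=f$ satisfying
\[
\int_{\mathbb{C}}|u|^2\,e^{-\varphi}\,d\sigma \;\leq\; 16\int_{\mathbb{C}}\frac{|f|^2}{\Delta(e^{\varphi}\Delta e^{-\varphi})}\,e^{-\varphi}\,d\sigma.
\]
Multiplying the PDE by $\alpha$ restores the original equation $\alpha\partial^1\bar{\partial}^1 u+cu=\alpha f$ without altering $u$ or its norm.

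To conclude, I would simply note that $|\alpha|\geq 1$ implies $1\leq |\alpha|^2$, so the right-hand side of the estimate above is bounded above by $16|\alpha|^2\int_{\mathbb{C}}|f|^2/\Delta(e^{\varphi}\Delta e^{-\varphi})\,e^{-\varphi}\,d\sigma$, which is exactly the claimed bound. There is no real obstacle in the present paper: all the analytic content (Hörmander-type $L^2$-estimate for the weight $\varphi$ with $\Delta(e^{\varphi}\Delta e^{-\varphi})>0$ at order $k=1$) is imported wholesale from \cite{3}. The only points requiring care are verifying that the integrability hypothesis on $f/\sqrt{\Delta(e^{\varphi}\Delta e^{-\varphi})}$ is preserved after dividing by $\alpha$ (it is, since $1/|\alpha|\leq 1$) and that the weak-solution notion used in \cite{3} matches the one fixed in Section~2 of the present paper, so that the identity $\alpha\partial\bar{\partial}u+cu=\alpha f$ holds in the sense prescribed here.
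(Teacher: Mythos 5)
Your proposal is correct and follows essentially the same route as the paper: divide the equation by $\alpha$ (using $|\alpha|\geq 1$ to set $c'=c/\alpha$), apply the cited $k=1$ existence theorem with the weight condition $\Delta(\mathrm{e}^{\varphi}\Delta \mathrm{e}^{-\varphi})>0$ to get the estimate with constant $16$, and then use $1\leq|\alpha|^2$ to obtain the stated bound $16|\alpha|^2$. The only minor slip is your remark about the integrability hypothesis being "preserved after dividing by $\alpha$" --- the datum $f$ is unchanged under the reduction (only $c$ is rescaled), so nothing needs checking there.
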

\begin{proof}
Let $ f \in L^2(\mathbb{\C}, \mathrm{\textnormal{e}}^{-\varphi})$ such that $$\frac{f}{\sqrt{\Delta(\mathrm{\textnormal{e}}^{\varphi} \Delta \mathrm{\textnormal{e}}^{-\varphi})}}  \in L^2(\mathbb{\C}, \mathrm{\textnormal{e}}^{-\varphi}),$$
according to the Theorem $1.2$ of \cite{3}, there exists a weak solution $u \in L^2(\mathbb{\C}, \mathrm{\textnormal{e}}^{-\varphi})$ of the equation
$$ \partial^1 \bar{\partial}^1 u + c' u =  f$$ where $c'= \frac{c}{\alpha}$ with the estimation of the norm 
$$\int_\mathbb{C} \vert u \vert^2 \mathrm{\textnormal{e}}^{-\varphi } d\sigma \leq 16  \int_\mathbb{C} \frac{\vert f \vert^2}{\Delta(\mathrm{\textnormal{e}}^{\varphi} \Delta \mathrm{\textnormal{e}}^{-\varphi})}  \mathrm{\textnormal{e}}^{-\varphi} d\sigma.$$
so $$\alpha \partial^1 \bar{\partial}^1 u + cu = \alpha f.$$
Or $\vert \alpha \vert \geq 1$ so
$$16  \int_\mathbb{C} \frac{\vert f \vert^2}{\Delta(\mathrm{\textnormal{e}}^{\varphi} \Delta \mathrm{\textnormal{e}}^{-\varphi})}  \mathrm{\textnormal{e}}^{-\varphi} d\sigma \leq  16  \vert \alpha \vert^2 \int_\mathbb{C} \frac{\vert f \vert^2}{\Delta(\mathrm{\textnormal{e}}^{\varphi} \Delta \mathrm{\textnormal{e}}^{-\varphi})}  \mathrm{\textnormal{e}}^{-\varphi} d\sigma.$$
Thus
$$\int_\mathbb{C} \vert u \vert^2 \mathrm{\textnormal{e}}^{-\varphi} d\sigma \leq 16  \vert \alpha \vert^2 \int_\mathbb{C} \frac{\vert f \vert^2}{\Delta(\mathrm{\textnormal{e}}^{\varphi} \Delta \mathrm{\textnormal{e}}^{-\varphi})}  \mathrm{\textnormal{e}}^{-\varphi} d\sigma.$$
\end{proof}
\subsection{Some consequences of the case of the operator $\alpha \partial^{k}  \bar{\partial}^{k} + c$}
Let $\lambda > 0$ and $z_0 \in \mathbb{C}$. Let $\varphi = \lambda^2 \vert z - z_0 \vert^2$, we get the following corollary of Theorem \ref{p} :
\begin{coro} \label{lam}
Let $f \in L^2(\mathbb{\C}, \mathrm{\textnormal{e}}^{-\lambda^2 \vert z - z_0 \vert^2})$, there exists a weak solution $u \in L^2(\mathbb{\C}, \mathrm{\textnormal{e}}^{-\lambda^2 \vert z - z_0 \vert^2})$ with the equation
$$\alpha \partial^k \bar{\partial}^{k} u + cu = \alpha f$$ with $\vert \alpha \vert \geq 1$ and the estimation of the norm
$$\int_\mathbb{C} \vert u \vert^2 \mathrm{\textnormal{e}}^{- \lambda^2 \vert z - z_0 \vert^2} d\sigma \leq \frac{\vert \alpha \vert^2}{(\lambda^k k!)^2} \int_\mathbb{C} \vert f \vert^2 \mathrm{\textnormal{e}}^{- \lambda^2 \vert z - z_0 \vert^2} d\sigma.$$
\end{coro}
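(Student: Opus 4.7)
The plan is to reduce Corollary \ref{lam} to Theorem \ref{p} by a dilation centered at $z_0$. Given $f \in L^2(\mathbb{C}, \mathrm{e}^{-\lambda^2 |z-z_0|^2})$, I introduce the change of variables $w = \lambda(z - z_0)$, equivalently $z = w/\lambda + z_0$. Under this substitution the weight $\mathrm{e}^{-\lambda^2|z-z_0|^2}$ becomes $\mathrm{e}^{-|w|^2}$ and the Lebesgue measure transforms as $d\sigma(w) = \lambda^2\, d\sigma(z)$. Setting $g(w) := f(w/\lambda + z_0)$, one checks immediately that $g \in L^2(\mathbb{C}, \mathrm{e}^{-|w|^2})$, with norms related by a single factor of $\lambda^2$ from the Jacobian.

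Next, by the chain rule, $\partial_z = \lambda\,\partial_w$ and $\bar{\partial}_z = \lambda\,\bar{\partial}_w$, so $\partial_z^k \bar{\partial}_z^k = \lambda^{2k}\, \partial_w^k \bar{\partial}_w^k$. I would then apply Theorem \ref{p} to $g$ with the complex constant $c/\lambda^{2k}$ in place of $c$ (which is allowed since Theorem \ref{p} holds for any complex constant). This produces a weak solution $v \in L^2(\mathbb{C}, \mathrm{e}^{-|w|^2})$ of
\[
\alpha\, \partial_w^k \bar{\partial}_w^k v + \frac{c}{\lambda^{2k}}\, v = \alpha\, g,
\]
together with the estimate $\int_{\mathbb{C}} |v|^2 \mathrm{e}^{-|w|^2} d\sigma(w) \leq \frac{|\alpha|^2}{(k!)^2} \int_{\mathbb{C}} |g|^2 \mathrm{e}^{-|w|^2} d\sigma(w)$.

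Finally I define $u(z) := \lambda^{-2k}\, v(\lambda(z-z_0))$; the prefactor $\lambda^{-2k}$ is chosen precisely to cancel the $\lambda^{2k}$ coming from the chain rule, so that $\partial_z^k \bar{\partial}_z^k u(z) = (\partial_w^k \bar{\partial}_w^k v)(w)$. A direct substitution then gives $\alpha\, \partial^k \bar{\partial}^k u + cu = \alpha\, f$, showing $u$ is the desired weak solution. The norm estimate follows by pulling back the bound on $\|v\|_{|w|^2}$ using the substitution: the factor $\lambda^2$ from each Jacobian and the factor $\lambda^{-4k}$ from $|u|^2 = \lambda^{-4k}|v|^2$ recombine with the constant $|\alpha|^2/(k!)^2$ to yield the claimed bound $\frac{|\alpha|^2}{(\lambda^k k!)^2}$.

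The main (and only) obstacle is routine bookkeeping of the scaling factors — the $\lambda^{2}$ appearing in both the source and target integrals, the $\lambda^{-4k}$ introduced by the dilation of $u$, and the $\lambda^{2k}$ from the differential operator — to confirm that they combine cleanly into the stated constant; there are no new analytic difficulties beyond those already resolved in Theorem \ref{p}.
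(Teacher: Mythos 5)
Your reduction is the same dilation argument the paper intends, and your verification of the equation is sound: with $w=\lambda(z-z_0)$, with $v$ solving $\alpha\,\partial_w^k\bar{\partial}_w^k v+\frac{c}{\lambda^{2k}}v=\alpha g$, and with $u(z)=\lambda^{-2k}v(\lambda(z-z_0))$, one does get $\alpha\,\partial^k\bar{\partial}^k u+cu=\alpha f$ weakly. The gap is precisely in the final ``routine bookkeeping'', which does not produce the stated constant. Carrying it out: $\int_{\mathbb{C}}|u|^2\mathrm{e}^{-\lambda^2|z-z_0|^2}d\sigma(z)=\lambda^{-4k-2}\int_{\mathbb{C}}|v|^2\mathrm{e}^{-|w|^2}d\sigma(w)\le \lambda^{-4k-2}\frac{|\alpha|^2}{(k!)^2}\int_{\mathbb{C}}|g|^2\mathrm{e}^{-|w|^2}d\sigma(w)$, while $\int_{\mathbb{C}}|g|^2\mathrm{e}^{-|w|^2}d\sigma(w)=\lambda^{2}\int_{\mathbb{C}}|f|^2\mathrm{e}^{-\lambda^2|z-z_0|^2}d\sigma(z)$. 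The factors therefore combine to $\frac{|\alpha|^2}{\lambda^{4k}(k!)^2}=\frac{|\alpha|^2}{(\lambda^{2k}k!)^2}$, not to $\frac{|\alpha|^2}{(\lambda^{k}k!)^2}$. These coincide only for $\lambda=1$; for $\lambda\ge 1$ your bound is stronger and the corollary follows a fortiori, but for $0<\lambda<1$ your bound is weaker and the stated estimate does not follow from your construction. So the assertion that the scaling factors ``recombine cleanly into the stated constant'' is exactly the step that fails.

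Note also that this cannot be repaired by adjusting the amplitude: the prefactor $\lambda^{-2k}$ and the shifted constant $c/\lambda^{2k}$ are forced by the requirement that $u$ solve the equation (since $\partial^k\bar{\partial}^k$ scales like $\lambda^{2k}$, not $\lambda^{k}$), and the resulting constant $\lambda^{-4k}$ is independent of any further rescaling of $v$. The paper's own proof instead takes $u=\lambda^{-k}v$ and the constant $c/\lambda^{k}$, copying the pattern of Corollary \ref{lam2} for the order-$k$ operator $\bar{\partial}^k$; that choice reproduces the stated constant, but then $\alpha\,\partial^k\bar{\partial}^k u+cu=\lambda^{k}\alpha f+c(\lambda^{-k}-1)v\neq\alpha f$ unless $\lambda=1$, so the tension you glossed over is genuinely present in the statement: the dilation method proves the estimate with $(\lambda^{2k}k!)^2$ in the denominator, and the constant $(\lambda^{k}k!)^2$ as stated is only obtained for $\lambda\ge 1$.
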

\begin{proof}
Let $f \in L^2(\mathbb{\C}, \mathrm{\textnormal{e}}^{-\lambda^2 \vert z - z_0 \vert^2})$, we have $$\int_\mathbb{C} \vert f \vert^2 \mathrm{\textnormal{e}}^{- \lambda^2 \vert z - z_0 \vert^2} d\sigma < + \infty.$$
Let $z = \frac{w}{\lambda} + z_0$ and $g(w)= f(z)= f(\frac{w}{\lambda} + z_0)$ then
$$\frac{1}{\lambda^2} \int_\mathbb{C} \vert g \vert^2 \mathrm{\textnormal{e}}^{- \vert w \vert^2} d\sigma(w) < + \infty.$$
so $g \in L^2(\mathbb{\C}, \mathrm{\textnormal{e}}^{-\vert w \vert^2})$. According to Theorem \ref{p}, there exists a weak solution $v \in L^2(\mathbb{\C}, \mathrm{\textnormal{e}}^{-\vert w \vert^2})$ of the equation
$$\alpha \partial^k \bar{\partial}^{k} v + \frac{c}{\lambda^k}v = \alpha g$$ with the estimation of norm
$$\int_\mathbb{C} \vert v \vert^2 \mathrm{\textnormal{e}}^{- \vert w \vert^2} d\sigma(w) \leq \frac{\vert \alpha \vert^2}{( k!)^2} \int_\mathbb{C} \vert g \vert^2 \mathrm{\textnormal{e}}^{- \vert w \vert^2} d\sigma(w).$$
Let $u(z) = \frac{1}{\lambda^k} v(w) = \frac{1}{\lambda^k} v(\lambda (z - z_0))$.\\
Then 
$$\alpha \partial^k \bar{\partial}^{k} u + cu = \alpha f$$ with the estimation of the norm
$$\int_\mathbb{C} \vert u \vert^2 \mathrm{\textnormal{e}}^{- \lambda^2 \vert z - z_0 \vert^2} d\sigma \leq \frac{\vert \alpha \vert^2}{(\lambda^k k!)^2} \int_\mathbb{C} \vert f \vert^2 \mathrm{\textnormal{e}}^{- \lambda^2 \vert z - z_0 \vert^2} d\sigma.$$
\end{proof}

As a consequence of Corollary \ref{lam} we have :
\begin{coro} \label{3}
Let $U \subset \mathbb{C}$ be a bonded open set.\\
Let $f \in L^2(U)$, there exists a weak solution $u \in L^2(U)$ of the equation
$$\alpha \partial^k \bar{\partial}^{k} u + cu = \alpha f$$ with $\vert \alpha \vert \geq 1$ and
$$\vert \vert u \vert \vert_{L^2(U)} \leq \Big(\frac{{\textnormal{e}}^{\vert U \vert^2}\vert \alpha \vert^2}{(k!)^2}\Big) \vert \vert f \vert \vert_{L^2(U)}$$ 
where $\vert U \vert $ is the diameter of $U$.
\end{coro}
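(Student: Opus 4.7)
The plan is to follow exactly the same strategy used in the proof of Corollary~\ref{4}, now relying on Corollary~\ref{lam} instead of Corollary~\ref{lam2}. I would pick a base point $z_{0}\in U$ and extend $f$ by zero to $\tilde{f}$ defined on all of $\mathbb{C}$. Since $U$ is bounded, $\tilde{f}\in L^{2}(\mathbb{C})$, and the Gaussian weight $\mathrm{e}^{-|z-z_{0}|^{2}}$ is bounded by $1$, so in particular $\tilde{f}\in L^{2}(\mathbb{C},\mathrm{e}^{-|z-z_{0}|^{2}})$.

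Next, I would apply Corollary~\ref{lam} with $\lambda=1$ and the chosen $z_{0}$: this produces a weak solution $\tilde{u}\in L^{2}(\mathbb{C},\mathrm{e}^{-|z-z_{0}|^{2}})$ of
\[
\alpha\,\partial^{k}\bar{\partial}^{k}\tilde{u}+c\tilde{u}=\alpha\tilde{f}
\]
with the weighted norm estimate
\[
\int_{\mathbb{C}}|\tilde{u}|^{2}\mathrm{e}^{-|z-z_{0}|^{2}}\,d\sigma\leq \frac{|\alpha|^{2}}{(k!)^{2}}\int_{\mathbb{C}}|\tilde{f}|^{2}\mathrm{e}^{-|z-z_{0}|^{2}}\,d\sigma.
\]

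To convert this into an unweighted $L^{2}(U)$ estimate, I would use the two standard Gaussian comparisons on $U$: since $|z-z_{0}|\leq|U|$ for all $z\in U$, we have $\mathrm{e}^{-|z-z_{0}|^{2}}\geq \mathrm{e}^{-|U|^{2}}$ on $U$, giving the lower bound
\[
\int_{\mathbb{C}}|\tilde{u}|^{2}\mathrm{e}^{-|z-z_{0}|^{2}}\,d\sigma\geq \mathrm{e}^{-|U|^{2}}\int_{U}|\tilde{u}|^{2}\,d\sigma,
\]
while, on the right, $\mathrm{e}^{-|z-z_{0}|^{2}}\leq 1$ and the support of $\tilde{f}$ is in $U$ yields
\[
\int_{\mathbb{C}}|\tilde{f}|^{2}\mathrm{e}^{-|z-z_{0}|^{2}}\,d\sigma\leq \int_{U}|f|^{2}\,d\sigma.
\]
Combining these two bounds with the estimate from Corollary~\ref{lam} and multiplying through by $\mathrm{e}^{|U|^{2}}$ gives precisely the claimed inequality, after defining $u:=\tilde{u}|_{U}$, which is still a weak solution of the equation on $U$ since the equation is local.

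There is essentially no obstacle here: the argument is a direct transcription of the proof of Corollary~\ref{4}, with Corollary~\ref{lam} replacing Corollary~\ref{lam2} and $|\alpha|^{2}/(k!)^{2}$ replacing $|\beta|^{2}/(k!)$. The only minor point worth being explicit about is that the restriction $\tilde{u}|_{U}$ remains a weak solution of $\alpha\partial^{k}\bar{\partial}^{k}u+cu=\alpha f$ on $U$, which follows because any test function $\phi\in C_{0}^{\infty}(U)$ is also in $C_{0}^{\infty}(\mathbb{C})$ after extension by zero.
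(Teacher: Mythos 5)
Your proposal is correct and follows essentially the same route as the paper's own proof: extend $f$ by zero, apply Corollary \ref{lam} with weight $\mathrm{e}^{-|z-z_0|^2}$ (i.e.\ $\lambda=1$), bound the weight above by $1$ on the right and below by $\mathrm{e}^{-|U|^2}$ on $U$, and restrict the solution to $U$. Your added remark that the restriction stays a weak solution because test functions in $C_0^\infty(U)$ extend by zero is a point the paper leaves implicit, but otherwise the two arguments coincide.
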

\begin{proof}
Let $z_0 \in U$ and $f \in L^2(U)$, we have
 \[ 
 \tilde{f}(z) = \left \{
 \begin{array}{rl}
 f(z) & \mbox{ if } z \in U \\
 0 & \mbox{ if } z \in \mathbb{C} \setminus U
 \end{array}
 \right.
 \]
 So $\tilde{f} \in L^2(\mathbb{C}) \subset L^2(\mathbb{\C}, \mathrm{\textnormal{e}}^{-\vert z - z_0 \vert^2})$. According to Corollary \ref{lam}, there exists a weak solution $\tilde{u} \in L^2(\mathbb{\C}, \mathrm{\textnormal{e}}^{-\vert z - z_0 \vert^2})$ of the equation $$\alpha \partial^k \bar{\partial}^{k} \tilde{u} + c\tilde{u} = \alpha \tilde{f}$$ with
 $$\int_\mathbb{C} \vert \tilde{u} \vert^2 \mathrm{\textnormal{e}}^{-  \vert z - z_0 \vert^2} d\sigma \leq \frac{\vert \alpha \vert^2}{(k!)^2} \int_\mathbb{C} \vert \tilde{f} \vert^2 \mathrm{\textnormal{e}}^{-  \vert z - z_0 \vert^2} d\sigma$$
 so
 $$\int_\mathbb{C} \vert \tilde{u} \vert^2 \mathrm{\textnormal{e}}^{-  \vert z - z_0 \vert^2} d\sigma \leq \frac{\vert \alpha \vert^2}{(k!)^2} \int_U \vert f \vert^2  d\sigma$$
 or $$\int_\mathbb{C} \vert \tilde{u} \vert^2 \mathrm{\textnormal{e}}^{-  \vert z - z_0 \vert^2} d\sigma \geq \mathrm{\textnormal{e}}^{-\vert U \vert^2} \int_U \vert \tilde{u} \vert^2  d\sigma$$
 then
 $$\int_U \vert \tilde{u} \vert^2  d\sigma \leq \Big(\frac{{\textnormal{e}}^{\vert U \vert^2}\vert \alpha \vert^2}{(k!)^2}\Big) \int_U \vert f \vert^2  d\sigma$$
 or $\tilde{u}_{\mid U} = u$ then we have
 $$\alpha \partial^k \bar{\partial}^{k} u + cu = \alpha f$$ with
$$\vert \vert u \vert \vert_{L^2(U)} \leq \Big(\frac{{\textnormal{e}}^{\vert U \vert^2}\vert \alpha \vert^2}{(k!)^2}\Big)\vert \vert f \vert \vert_{L^2(U)}.$$ 
\end{proof}

\end{document}